\documentclass{amsart}
\usepackage[cp1251]{inputenc}   

\usepackage{amsmath,amssymb,mathrsfs,bbm,yhmath,longtable,mathtools}
\usepackage{amsthm,rotating,xcolor,epsfig,hyperref,url,MnSymbol,rotate}
\usepackage{graphicx}
\usepackage[weather]{ifsym}

\usepackage{caption}
\usepackage{subcaption}

\usepackage{tikz}
\usetikzlibrary{knots}
\usetikzlibrary{decorations.pathmorphing,decorations.markings}
\usetikzlibrary{arrows,positioning}
\usetikzlibrary{svg.path}

\usepackage[sort,compress]{cite}

\theoremstyle{plain}
\newtheorem{lemma}{Lemma}
\newtheorem{theorem}{Theorem}
\newtheorem{definition}{Definition}
\newtheorem{corollary}{Corollary}

\newtheorem{remark}{Remark}

\newcommand{\MyCal}[1]{{\mathcal {#1}}}

\newcommand{\sign} {\operatorname{sign}}

\newcommand{\cross}[1]{\#{#1}}

\newcommand{\Input}{{\operatorname{inp}}}
\newcommand{\Output}{{\operatorname{out}}}
\newcommand{\SignRing}{\mathbb{C}}

\newcommand{\Left}[1]{{\operatorname{L}(#1)}}
\newcommand{\Right}[1]{{\operatorname{R}(#1)}}
\newcommand{\re}{\operatorname{Re}}
\newcommand{\im}{\operatorname{Im}}

\title[A {K}auffman bracket polynomial for the non-orientable thickening of a non-orientable surface]
{An analog of the {K}auffman bracket polynomial for knots in the non-orientable thickening of a non-orientable surface}
\author{Vladimir Tarkaev}
\thanks{The work is supported by RSF (grant number 23-21-10014).}
\address{\noindent Chelyabinsk State University, Chelyabinsk, Russia}
\address{\noindent Krasovskii Institute of Mathematics and Mechanics, Ural Branch of the Russian Academy of Sciences, Yekaterinburg, Russia}
\email{trk@csu.ru}

\begin{document}

\begin{abstract}
We study pseudo-classical knots in the non-orientable thickening of a non-orientable surface,
specifically knots that are orientation-preserving paths
in a non-orientable $3$-manifold of the form (non-orientable surface) $\times$ $[0, 1]$.
For these knots, we propose an analog of the Kauffman bracket polynomial.
The construction of this polynomial closely mirrors the classical version,
with key differences in the definitions of the sign of a crossing and the positive/negative smoothing of a crossing.
We prove that this polynomial is an isotopy invariant of pseudo-classical knots
and demonstrate that it is independent of the classical Kauffman bracket polynomial for knots in the thickened orientable surface,
which is the orientable double cover of the non-orientable surface under consideration.

Key words:
knots in non-orientable manifold,
knots in thickened surface,
pseudo-classical knots,
Kauffman bracket polynomial

\end{abstract}

\maketitle

\section*{Introduction}
\label{sec:Introduction}  

Initially, the theory of knots studied knots in the $3$-sphere,
but now many other kinds of object are also considered in the theory.
In the context of the present paper, first of all we need to mention
knots in a thickened surface,
that is in an orientable $3$-manifold of the form
(orientable surface) $\times$ $[0,1]$.
Some authors consider the case of knots in a thickened non-orientable surface
(\cite{Bourgoin2008},\cite{Drobotukhina1990},\cite{KamadaKamada2016},\cite{Nabeeva2016})
where the thickening  is understood
as an orientable $3$-manifold equipped with an $I$-bundle structure over a surface
which is not necessarily orientable  (here $I$ denotes a segment).
Note that in all these cases the thickening is assumed to be orientable independently whether the base surface is orientable or not.

Knots in non-orientable $3$-manifolds are also studied 
(see, for example, \cite{Vassiliev1998}, \cite{Vassiliev2016})
but, unfortunately, the subject is not very popular.
Maybe, one of the reasons is that some important constructions cannot be transferred straightforwardly from an orientable to a non-orientable case.
In particular, that is so for the Kauffman bracket polynomial (\cite{Kauffman1987},\cite{Kauffman1999})
and many other polynomial knot invariants
that explicitly use in their construction the notion of the sign of a crossing
(see, for example, \cite{Henrich2010},\cite{Kauffman2013},\cite{Tarkaev2020_1},\cite{Vesnin2018}).
In the classical situation, the sign of a crossing $x$ of an oriented diagram $D$ on a surface $\Sigma$
is defined to be $1$ (resp. $-1$) if a pair
(a positive tangent vector of overgoing branch at $x$, a positive tangent vector of undergoing branch at $x$)
is positive (resp. negative) basis
in the tangent space of $\Sigma$ at $x$.
Clearly, the definition cannot be used if $\Sigma$ is non-orientable.
One more example is the notion of positive and negative smoothing of a crossing (in other terms, the smoothing of the type~$A$ and of the type~$B$)
which play a key role in the construction of the Kauffman bracket polynomial.
The definition of the latter notion uses concepts of ``right'' and ``left'' which have no sense on a non-orientable surface.
Note that aforementioned notions can be reformulated
so that they have sense in the case of the orientable thickening of a non-orientable surface.
However, the approach allowing to do this cannot be used
in the case of the non-orientable thickening of a non-orientable surface.

In the present paper, we study knots in the non-orientable thickening of a non-orientable surface,
that is  a non-orientable $3$-manifold of the form $\Sigma \times [0,1]$,
where $\Sigma$ is a non-orientable surface with (maybe) non-empty boundary.
We propose an approach
that in some particular case gives correct definitions of the sign of a crossing
and of two types of a smoothing.
The knots we mean (we call them pseudo-classical) are those
which are orientation-preserving paths in the manifold.
Two aforementioned definitions allow defining an analog of the Kauffman bracket polynomial for pseudo-classical knots.
The construction we use word-for-word coincides with the classical one,
the specificity of the non-orientable thickening is hidden in using definition of positive and negative smoothing.
Our proof of the invariance of the polynomial (we denote it by $J$) is close to the classical one but do not coincide with the latter.
Finally, we compare the polynomial $J$ with its natural competitor --- with the Kauffman bracket polynomial for knots in the thickened orientable surface, 
which is the orientable double cover of the non-orientable surface under consideration.
We demonstrate that none of  these invariants is a consequence of the other.
Namely, we consider two pairs of knots in the non-orientable thickening of the Klein bottle
so that knots forming the first pair have coinciding polynomials $J$
while $2$-component links in the thickened torus those are their double covers have distinct Kauffman bracket polynomials.
Knots forming the second pair, on the contrary, have distinct polynomials $J$ while their double covers have coinciding Kauffman bracket polynomials.

The paper is organized as follows.
In Section~\ref{sec:Definitions},
we give some definitions, including a new definition of the sign of a crossing.
In Section~\ref{sec:BracketPolynomial},
we define two types of smoothing.
Then, we define the polynomial $J$ and prove its invariance.
In the end of this section, we briefly discuss a generalization of the polynomial
analogous to the homological version of the Kauffman bracket polynomial for knots in a thickened orientable surface
~\cite{KauffmanDye2005}.
In Section~\ref{sec:vs},
we compare the polynomial $J$ and the Kauffman bracket polynomial for orientable double cover.
In particular, in Section~\ref{sec:J is stronger},
we explicitly compute the polynomial $J$ for two knots in the non-orientable thickening of the Klein bottle.
In Appendix, we give source data
needed for computing the values  of the Kauffman bracket polynomial
in the format acceptable by the computer program ``3--manifold recognizer''.

\section{Definitions}
\label{sec:Definitions}

\subsection{Knots and diagrams}
\label{sec:Knots}

Throughout  this paper $\Sigma$ denotes a non-orientable surface with (maybe) non-empty boundary
and $\hat{\Sigma}$ denotes the non-orientable manifold of the form $\Sigma \times [0,1]$
with fixed structure of the direct product.
The latter condition becomes crucial for us in the case
of surface with non-empty boundary  
when there are aforementioned structures of the corresponding manifold
with homeomorphic but non-isotopic base surfaces.

Knots in $\hat{\Sigma}$ and their diagrams on $\Sigma$ can be defined by analogy with the case
of knots in a thickened orientable surface
(below, we will refer to the latter case  as the ``classical case'').
Knots in $\hat{\Sigma}$ will be considered up to ambient isotopy.

In our case, two diagrams on $\Sigma$ represent the same knot
if and only if they are connected by a finite sequence of Reidemeister moves $R_1,R_2,R_3$
(which are just the same as classical ones)
and ambient isotopies.
indeed, let $\Sigma$ be represented by polygon
of which sides are assumed to be identified (maybe) with a twist
and a diagram is drawn inside the polygon.
If we isotope the knot in question inside the direct product of the polygon and the segment,
then we can refer to the corresponding classical fact.
Otherwise, it is necessary to consider a few auxiliary moves.
These moves describe how we push arcs and crossings  of the diagrams
through the boundary of the polygon.
In the case of knots in orientable thickening of non-orientable surface
(see, for example, \cite{Drobotukhina1990}, \cite{Bourgoin2008}),
some of these moves permute overgoing and undergoing branches at pushing crossing.
In our case, $\hat{\Sigma}$ is the direct product
hence we have no moves permuting overgoing and undergoing branches,
i.e., all auxiliary moves represent ambient isotopies of a diagram.
Therefore, in proofs below, we can restrict ourselves to Reidemeister moves $R_1,R_2,R_3$.

\subsection{Pseudo-classical knots}
\label{sec:AlmostClassical}

A knot $K \subset \hat{\Sigma}$ is called an \emph{pseudo-classical}
if it is an orientation-preserving path in $\hat{\Sigma}$.
In other words, a knot in $\hat{\Sigma}$ is pseudo-classical
if its regular neighborhood in the manifold is homeomorphic to the solid torus.
Clearly, in a non-orientable manifolds not all knots have the property
since, in this case, there are knots whose regular neighborhood is homeomorphic to the solid Klein bottle.
However, the latter kind of knots is out of consideration in the present paper.

Let $K$ be a pseudo-classical oriented knot represented by a diagram $D \subset \Sigma$.
Since the knot is pseudo-classical,
the diagram (or, more precisely, the projection of $K$ onto $\Sigma$)
viewed as a closed directed path on the surface
is an orientation-preserving  path.
Hence, $2$-cabling $D^2 \subset \Sigma$ of $D$
is a diagram of a $2$-component link
(recall that $2$-cabling of a diagram is, informally speaking, the same diagram
drawn by doubled-line with preserving over/under-information in all crossings).
Note again that this is not the case for an arbitrary diagram on $\Sigma$.
This is because $\Sigma$ is assumed to be non-orientable
hence there are knot diagrams on the surface,
those are orientation-reversing paths.
Hence, their $2$-cabling represents not a $2$-component link but a knot
which goes along the given knot twice.
In the latter case, $2$-cabling bounds the {M}\"obius strip (going across itself near each crossing of the given diagram)
while $2$-cabling of a pseudo-classical knot bounds an annulus (having self-intersections).

In the case of an oriented surface, 
one can speak of left and right components
of $2$-cabling of an oriented knot diagram.
Since $\Sigma$ is non-orientable, we do not have the concepts of left and right.
 However, we will use the terms ``left component'' and ``right component'' of $D^2$
keeping in mind that in our situation this is nothing more than a convenient notation.
A choice (which component is chosen as the left and which is chosen as the right) will be called the \emph{labeling} of $D^2$.
The components will be denoted by $\Left{D}$ and $\Right{D}$, respectively.
Below, $\Left{D}$ and $\Right{D}$ are assumed to be oriented
so that their orientations are agreed with the orientation of $D$.
Sometimes we will rename $\Right{D}$ into $\Left{D}$ and vice versa,
the transformation will be called the \emph{relabeling} of $D^2$.

A single crossing of $D$ becomes in $D^2$ a pattern of $4$ crossings
(see Fig.~\ref{fig:1}).
There are two distinguished crossings  in each of these patterns:
the first one in which components of $D^2$
come into the pattern and the second one in which they go out.
These crossings are called the \emph{input} (resp. \emph{output})  crossing for the crossing $x$ 
and they are denoted by $\Input(x)$ (resp. $\Output(x)$\,),
where  $x \in \cross{D}$ is that crossing of $D$ to which the pattern corresponds.
Here and below we denote by $\cross{D}$ the set of crossings of a diagram $D$.

Note if the surface under consideration is orientable
then input and output crossings are necessarily intersections of distinct components of $2$-cabling
while in non-orientable case
the situation when input and output crossings are a self-crossing of the same component of $D^2$ may occur.

\begin{figure}
\centering 
\begin{subfigure}[b]{0.45\textwidth}
\centering
\psfig{figure=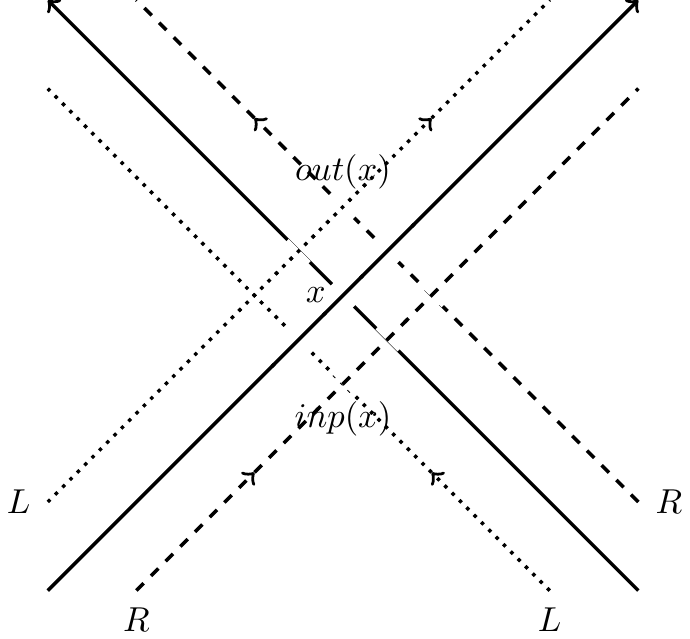, width=0.8\textwidth}
\caption{}
\label{fig:1_1}
\end{subfigure}
\begin{subfigure}[b]{0.45\textwidth}
\centering
\psfig{figure=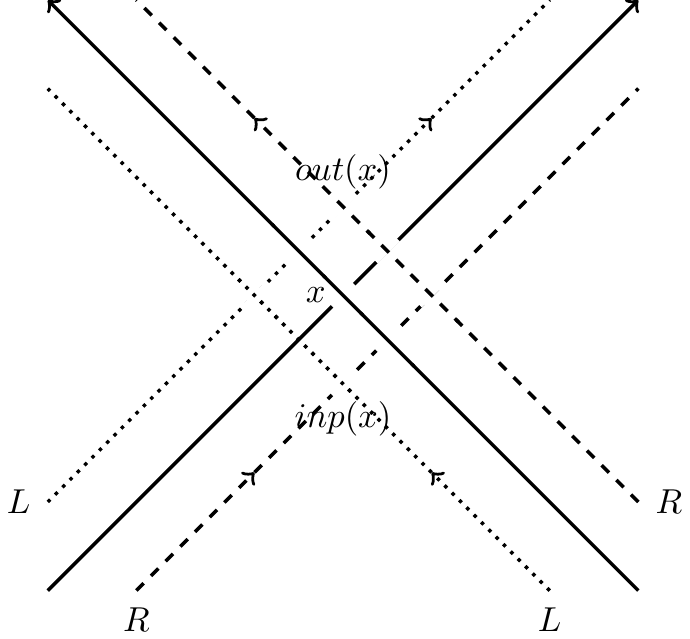, width=0.8\textwidth}
\caption{}
\label{fig:1_2}
\end{subfigure}
\begin{subfigure}[b]{0.45\textwidth}
\centering
\psfig{figure=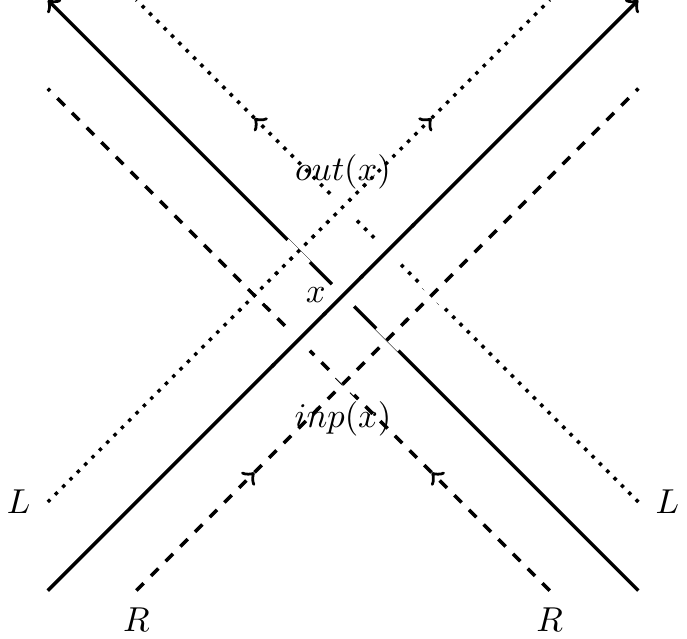, width=0.8\textwidth}
\caption{}
\label{fig:1_3}
\end{subfigure}
\begin{subfigure}[b]{0.45\textwidth}
\centering
\psfig{figure=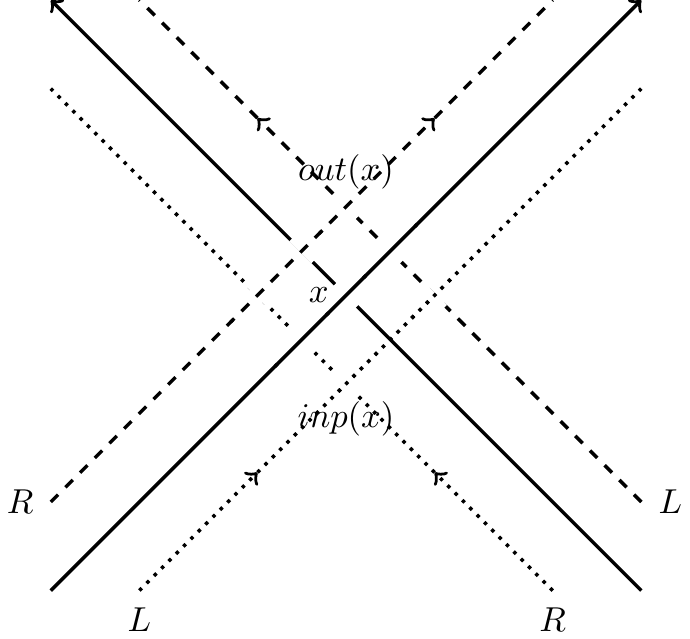, width=0.8\textwidth}
\caption{}
\label{fig:1_4}
\end{subfigure}
\caption{Segments belong to $D, \Right{D}$ and $\Left{D}$ are drawn by solid, dashed and dotted lines, respectively.
}
\label{fig:1}
\end{figure}

\subsection{The sign of a crossing}
\label{sec:Sign}

Recall that in the classical situation the sign of a crossing $x$ of an oriented diagram $D$ on an oriented surface $F$
is defined to be $1$ (resp. $-1$) if the pair $(t_1, t_2)$
is a positive (resp. negative) basis
in the tangent space  of $F$ at $x$,
where $t_1$ and $t_2$ are, respectively, a positive tangent vector of overgoing and undergoing branches of $D$ at $x$.
Clearly, the  definition cannot be used if the surface  is non-orientable.
In the following definition, we use a labeling of  $2$-cabling of the diagram  under consideration instead of the orientation of the surface
in which the diagram lies.

\begin{definition} \label{def:SignForKnot}
Let $D \subset \Sigma$ and $D^2$ be a diagram of an oriented pseudo-classical knot and its $2$-cabling.
The \emph{sign} of a crossing $x \in \cross{D}$ is defined to be the mapping
$\sign : \cross{D} \to \SignRing$
given by the following rule:
\begin{equation} \label{eq:Sign}
\sign(x) =\begin{cases}
1 & \text{if $\Right{D}$ goes over $\Left{D}$ in $\Input(x)$ (Fig.~\ref{fig:1_1}),} \\
-1 & \text{if $\Left{D}$ goes over $\Right{D}$ in $\Input(x)$ (Fig.~\ref{fig:1_2}),} \\
i & \text{if $\Right{D}$ goes over itself in $\Input(x)$ (Fig.~\ref{fig:1_3}),} \\
-i & \text{if $\Left{D}$ goes over itself in $\Input(x)$ (Fig.~\ref{fig:1_4}),}
\end{cases}
\end{equation}
where $i \in \SignRing$ is the imaginary unit. 
\end{definition}

We will say that a crossing $x \in \cross{D}$ is of the \emph{type $1$}
if $\Input(x)$ is an intersection of two distinct components of $D^2$,
otherwise we will say that $x$ is of the \emph{type $2$}.
In other words, a crossing $x$ is of the type~$1$ if $\sign(x) =\pm 1$
and $x$ is of the type~$2$ if $\sign(x) =\pm i$.

\begin{remark} \label{remark:PathsAndTypeOfCrossing}
A crossing $x \in \cross{D}$ decomposes $D$ into two closed paths $p_1(x),p_2(x)$.
Both paths start at $x$ in the positive direction (the first one by the overgoing arc, the other one by the undergoing arc), then the paths go along $D$ in positive direction
until the first return to $x$.
throughout  this paper we consider pseudo-classical knots only,
of which diagrams, as we mentioned above, viewed as a directed path on $\Sigma$
is an orientation-preserving path.
Hence, if $x$ is of the type~$1$
then both of $p_1(x)$ and $p_2(x)$ are paths preserving the orientation,
while if $x$ is of the type~$2$ both of $p_1(x)$ and $p_2(x)$ reverses the orientation.
Therefore, the situation when one of $p_1(x),p_2(x)$ preserves the orientation while the other does not
is impossible in our context.
\end{remark}

Recall that the \emph{crossing change operation} in a crossing $x \in \cross{D}$ is the transformation of $D$
so that overgoing arc in $x$ becomes undergoing and vice versa
while the rest of the diagram remains unchanged.

The following lemma is a direct consequence  of our definition of the sign
and the observation that the input crossing of each $4$-crossing pattern becomes the output crossing and vice versa
as a result of the reversing orientation of $D$.

\begin{lemma} \label{lemma:KnotSignProperties}
$\,\,$\\
{\bf (1)}.
If a crossing $x \in \cross{D}$ is of the type~$1$,
then each of the following transformations of $D$ and $D^2$ multiplies the sign of $x$ by $-1$:
\begin{itemize}
\item[a)] a crossing change operation, 
\item[b)] the reversing of the orientation of $D$,
\item[c)] the relabeling of $D^2$.
\end{itemize}

{\bf (2)}.
If a crossing $x \in \cross{D}$ is of the type~$2$,
then $\sign(x)$ is preserved under the crossing change operation,
while the relabeling of $D^2$ and the reversing orientation of $D$ multiply the sign of $x$ by $-1$.
\end{lemma}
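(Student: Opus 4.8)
The plan is to prove every assertion by direct inspection of Definition~\ref{def:SignForKnot}, organized around a single fixed model of the four-crossing pattern together with careful bookkeeping of which copy of each branch belongs to which component of $D^2$. First I would fix a crossing $x \in \cross{D}$, denote the two parallel copies of its overgoing branch by $O_1, O_2$ and of its undergoing branch by $U_1, U_2$, so that the four crossings of the pattern are exactly the points $O_i \cap U_j$, with the $O$-strand passing over at each of them. The over/under relation in the pattern is thereby fixed once and for all, and the only data that vary under the three transformations are: (i) which of the four crossings is $\Input(x)$; (ii) how the four strands group into the two components $\Left{D}, \Right{D}$; and (iii) which component carries the name left and which the name right. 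Reducing the whole lemma to these three pieces of data is the organizing idea.

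The key structural input I would establish first is the component grouping as a function of the type, using Remark~\ref{remark:PathsAndTypeOfCrossing}. Following the copy $O_1$ around the loop $p_1(x)$, it returns along $U$ as the copy on the same local side if $p_1$ preserves orientation and as the copy on the opposite side if $p_1$ reverses it. Hence in the type~$1$ case the components are $\{O_1, U_1\}$ and $\{O_2, U_2\}$, so the crossings $O_1\cap U_2, O_2\cap U_1$ mix the two components while $O_1\cap U_1, O_2\cap U_2$ are self-crossings; in the type~$2$ case the components are $\{O_1, U_2\}$ and $\{O_2, U_1\}$, so now $O_1\cap U_2, O_2\cap U_1$ are the self-crossings and $O_1\cap U_1, O_2\cap U_2$ are the mixed ones. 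In either type the two candidates for $\Input(x)$ and $\Output(x)$ are thereby pinned to a complementary pair of crossings carrying opposite sign data (namely $+1/-1$ in type~$1$ and $+i/-i$ in type~$2$, since the overgoing branch is fixed), which is exactly what makes the remaining case analysis finite and forced.

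With this model each transformation becomes a one-line check against the four cases of~\eqref{eq:Sign}. A crossing change reverses the over/under relation at every crossing of the pattern while leaving the orientation, and hence the positions of $\Input(x)$ and $\Output(x)$, untouched: at a mixed crossing it turns ``$\Right{D}$ over $\Left{D}$'' into ``$\Left{D}$ over $\Right{D}$'', multiplying the sign by $-1$ (statement (a) of part~(1)), whereas at a self-crossing both strands lie on one component, so ``$\Right{D}$ over itself'' remains ``$\Right{D}$ over itself'' and the sign is preserved (the type~$2$ claim). The relabeling merely interchanges the names $\Left{D} \leftrightarrow \Right{D}$, sending $1 \leftrightarrow -1$ in type~$1$ and $i \leftrightarrow -i$ in type~$2$, i.e.\ multiplying by $-1$ in both cases (statement (c) of part~(1) and the type~$2$ claim). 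For reversing the orientation of $D$ I would invoke the observation quoted just before the lemma: it swaps $\Input(x)$ with $\Output(x)$ while leaving the over/under relation and the component labels fixed; since the two admissible crossings carry opposite signs in each type, replacing the input by the output multiplies $\sign(x)$ by $-1$ (statement (b) of part~(1) and the type~$2$ claim).

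The step I expect to be the main obstacle is not any single computation but justifying rigorously that reversing the orientation of $D$ genuinely interchanges $\Input(x)$ and $\Output(x)$ while \emph{preserving the type} of the crossing, so that in type~$2$ the new input is the \emph{other} self-crossing rather than a mixed crossing. For this I would note that being orientation-preserving or orientation-reversing is a property of the underlying loops $p_1(x), p_2(x)$ that is independent of their direction of traversal, so the type is invariant under reversing the orientation of $D$; combined with the input/output swap, this forces $\Output(x)$ to be of the same kind as $\Input(x)$, and since there are exactly two self-crossings (resp.\ two mixed crossings) in the pattern it must be the one complementary to $\Input(x)$. Once this is in place, every remaining assertion is a direct reading of the four cases of~\eqref{eq:Sign}.
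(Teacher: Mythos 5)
Your proposal is correct and takes essentially the same approach as the paper, which offers no detailed argument but simply declares the lemma ``a direct consequence of our definition of the sign and the observation that the input crossing of each $4$-crossing pattern becomes the output crossing and vice versa as a result of the reversing orientation of $D$.'' Your write-up supplies exactly the details behind that sentence --- in particular the verification that the input/output diagonal pair is homogeneous in kind (both mixed or both self-crossings) and carries opposite sign data, which is the one point the paper leaves entirely implicit.
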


\begin{remark}
In the classical situation, when both the surface and its thickening are orientable
it is natural to choose as $\Right{D}$ that component of $D^2$ that goes to the right of $D$ with respect to its actual orientation.
In the orientable case, all crossings are of the type $1$
and the rule~\eqref{eq:Sign}  gives the standard sign of a crossing.
In this case, the reversing orientation of the knot implies
simultaneous reversing orientation of $D$ and the relabeling of $D^2$,
hence, by Lemma~\ref{lemma:KnotSignProperties}(1),
the signs of a crossing does not depend on the orientation of the diagram.
\end{remark}

Using the  aforementioned definition of the sign, we can define an analog of the writhe number of a diagram
\[
w(D) =\sum_{x \in \cross{D}}\sign(x) \in \SignRing
\]
and two more writhe numbers:
\begin{eqnarray}
w_1(D) =\sum_{\substack{x \in \cross{D}, \\ \sign(x) =\pm 1}}\sign(x)
=\re w(D) \in \mathbb{R},
\label{eq:w_1} \\
w_2(D) =\sum_{\substack{x \in \cross{D}, \\ \sign(x) =\pm i}}\im \sign(x)
=\im w(D) \in \mathbb{R}.
\label{eq:w_2}
\end{eqnarray}

As in the classical case, the first Reidemeister move change the value $w(D)$,
however, the following holds:

\begin{lemma} \label{lemma:w_2}
If diagrams $D_1,D_2$ represent the same oriented pseudo-classical knot $K \subset \hat{\Sigma}$ then
\begin{equation} \label{eq:W_2Invariant}
|w_2(D_1)| =|w_2(D_2)|.
\end{equation}
\end{lemma}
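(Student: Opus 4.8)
The plan is to show that $|w_2|$ is unchanged under the three Reidemeister moves, which by the discussion in Section~\ref{sec:Knots} suffices. First I would record that $|w_2(D)|$ is independent of the auxiliary choice of labeling: by Lemma~\ref{lemma:KnotSignProperties}(2) relabeling $D^2$ multiplies every type-$2$ sign by $-1$, hence sends $w_2(D)$ to $-w_2(D)$ and leaves $|w_2(D)|$ fixed. Thus I may fix, for each diagram, an arbitrary labeling of its $2$-cabling; since every Reidemeister move is supported in a disk, the two components $\Left{D},\Right{D}$ persist across the move and their labels can be carried along. It is therefore enough to prove that $w_2$ itself is preserved by each move for such a tracked labeling.

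The moves $R_3$ and $R_1$ are the easy ones. Under $R_3$ the three crossings before and after are in an obvious bijection carrying the same over/under data, the same participating components of $D^2$, and the same $\Input/\Output$ structure; hence neither the type of a crossing nor the alternative chosen in Definition~\ref{def:SignForKnot} changes, so every individual sign—and a fortiori $w_2$—is literally unchanged. For $R_1$ the single new crossing $x$ splits $D$ into the small kink loop and the rest of the diagram; the kink loop is contractible, hence orientation-preserving, so by Remark~\ref{remark:PathsAndTypeOfCrossing} the crossing $x$ is of type~$1$ and contributes to $w_1$ only. Therefore $w_2$ is unaffected.

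The essential case is $R_2$, which creates (or destroys) two crossings $x_1,x_2$ bounding a bigon. My first claim is that $x_1$ and $x_2$ always have the same type. Splitting $D$ at $x_1$ and at $x_2$ produces loops that differ only by arcs lying in the bigon disk, which is orientable and so contributes nothing to the orientation character; consequently the relevant split loops are simultaneously orientation-preserving or orientation-reversing, and by Remark~\ref{remark:PathsAndTypeOfCrossing} the two crossings are simultaneously of type~$1$ or of type~$2$. If they are of type~$1$ they contribute nothing to $w_2$ and we are done. If they are of type~$2$, I must show their contributions cancel, i.e. that one sign is $+i$ and the other $-i$. Here the non-orientability enters decisively: the crossings are of type~$2$ exactly because the arc of $D$ joining the two sides of the bigon through the rest of the diagram is orientation-reversing, and traversing an orientation-reversing arc interchanges the ``left'' and ``right'' push-offs of $D$. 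Hence the single component that self-crosses at $\Input(x_1)$ and the one that self-crosses at $\Input(x_2)$ are opposite members of $\{\Left{D},\Right{D}\}$, forcing $\sign(x_1)=-\sign(x_2)$ among the values $\pm i$.

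The main obstacle is precisely this last step: making rigorous the bookkeeping that an orientation-reversing return arc swaps the labels of the two cabling components, and checking that this swap is exactly what turns a ``$\Right{D}$-over-$\Right{D}$'' input crossing at $x_1$ into a ``$\Left{D}$-over-$\Left{D}$'' input crossing at $x_2$ (and likewise in the second orientation pattern of the two strands, which I would treat analogously). Once this local interchange of $\Left{D}$ and $\Right{D}$ is verified, the cancellation $+i-i=0$ in $w_2$ follows, $R_2$ preserves $w_2$, and assembling the three moves yields $|w_2(D_1)|=|w_2(D_2)|$.
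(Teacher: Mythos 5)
Your proposal is correct and follows essentially the same route as the paper: check invariance of $w_2$ under the three Reidemeister moves, observing that $R_1$ creates only a type-$1$ crossing, that the two $R_2$ crossings share a type and their signs cancel ($+1,-1$ or $+i,-i$), that $R_3$ changes nothing, and that the labeling ambiguity forces the absolute value. The one step you flag as delicate --- that for a type-$2$ bigon the input crossings of $x_1$ and $x_2$ are self-crossings of \emph{opposite} components --- is precisely the step the paper asserts without further argument (pointing to its figures), and your push-off bookkeeping sketch of it is sound.
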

Therefore, the absolute value of the writhe number $w_2$ is an isotopy invariant of a pseudo-classical knot.

\begin{proof}
We need to consider three Reidemeister moves.

The move $R_1$ creates/removes a crossing of the type~$1$ (Fig.~\ref{fig:2_1}),
so it does not affect the value $w_2$.

Both vertices involved in the move $R_2$ (Fig.~\ref{fig:2_2}) are of the same type (either type~$1$ or type~$2$),
and the reversing the orientation of $D$ and the relabeling  of $D^2$
preserve the type.
If these vertices are of the type~$1$ then, as in the classical case, their signs are equal to $+1$ and $-1$.
If the vertices are of the type ~$2$  their signs are equal to $+i$ and $-i$.
Hence, in both cases the sum of the signs is equal to $0$
and values of all three writhe numbers ($w,w_1,w_2$) are preserved under the move $R_2$.

The move $R_3$ (Figs.~\ref{fig:2_3},\ref{fig:2_4}) also does not affect all three writhe numbers.
Therefore, all Reidemeister moves do not change $w_2$.
However, we are forced to use the absolute value in~\eqref{eq:W_2Invariant}
because there is no a canonical way to choose which component of $D^2$ is the right one.
Hence,  (see Lemma~\ref{lemma:KnotSignProperties}(2))
the value $w_2(D)$is defined up to
multiplication by $-1$.

\begin{figure}[h!]
\centering 
\begin{subfigure}[b]{0.45\textwidth}
\centering
\psfig{figure=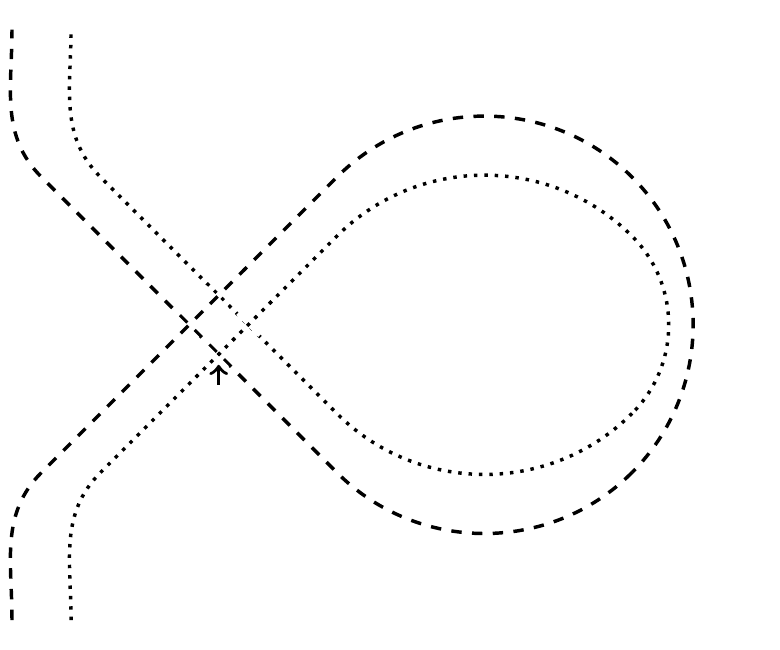, width=0.8\textwidth}
\caption{The move $R_1$}
\label{fig:2_1}
\end{subfigure}
\begin{subfigure}[b]{0.45\textwidth}
\centering
\psfig{figure=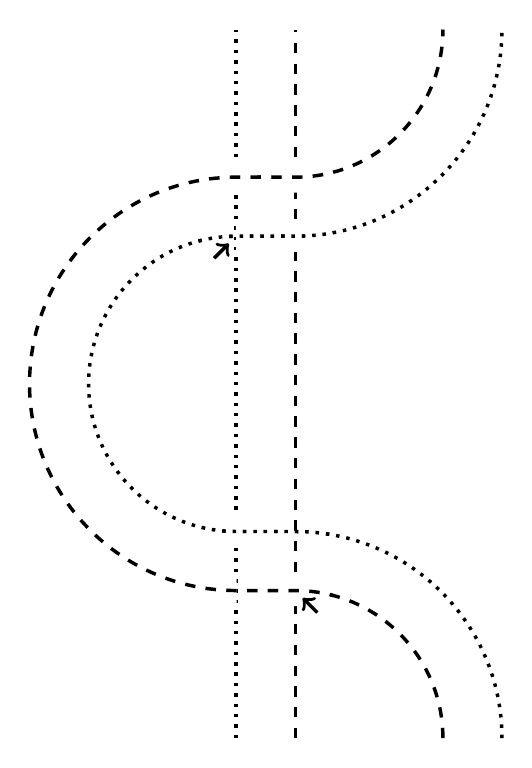, width=0.8\textwidth}
\caption{The move $R_2$}
\label{fig:2_2}
\end{subfigure}
\begin{subfigure}[b]{0.45\textwidth}
\centering
\psfig{figure=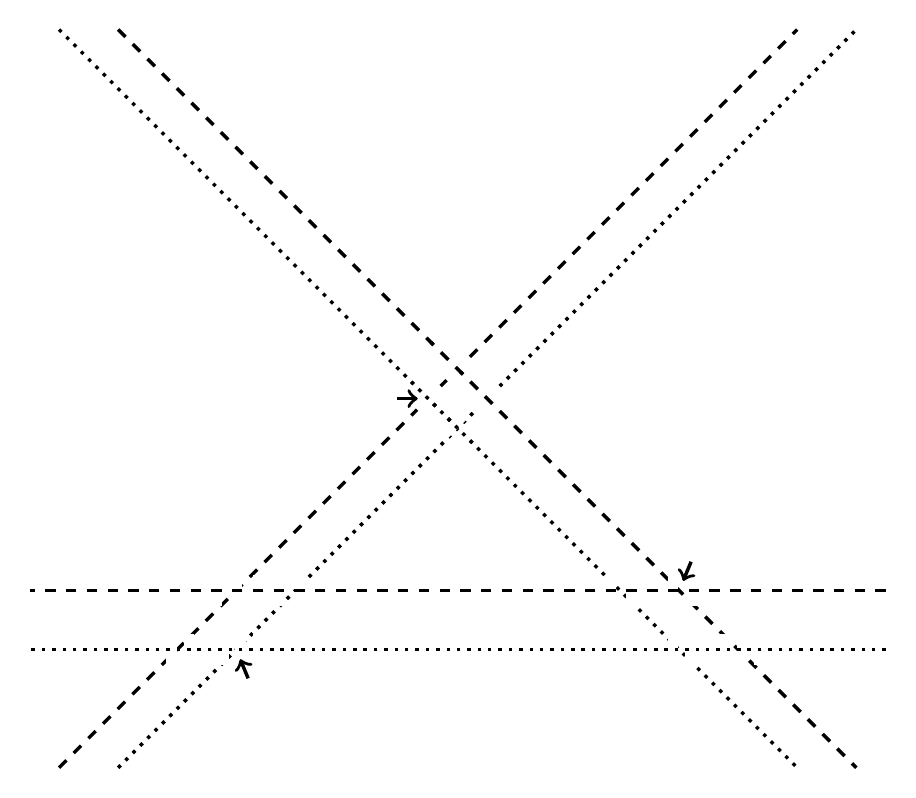, width=0.8\textwidth}
\caption{Before the move $R_3$}
\label{fig:2_3}
\end{subfigure}
\begin{subfigure}[b]{0.45\textwidth}
\centering
\psfig{figure=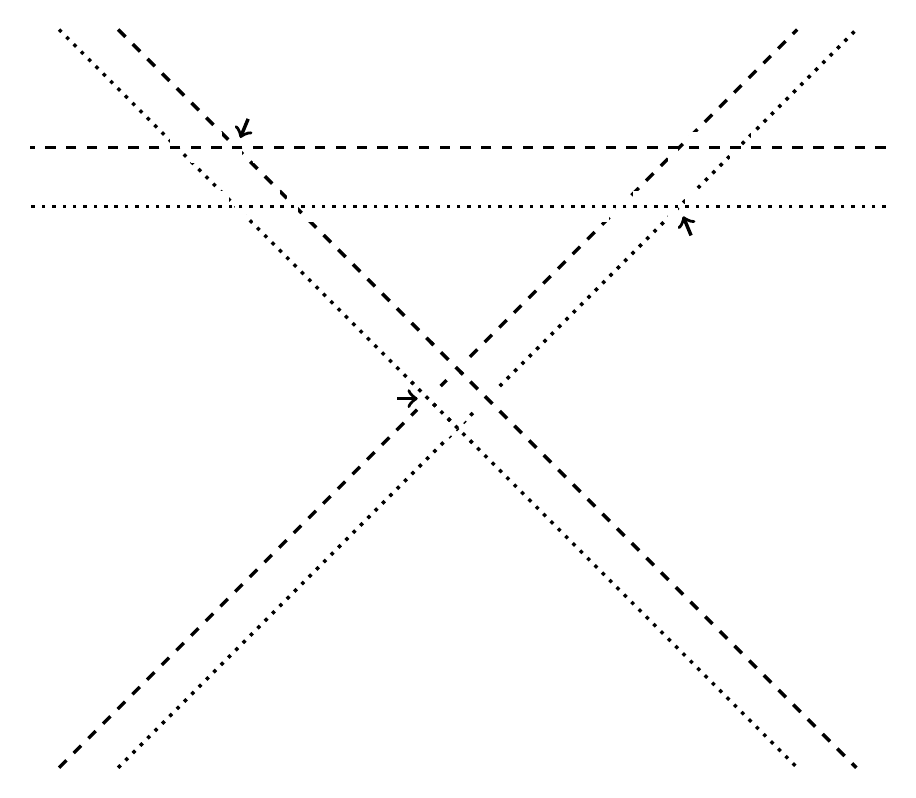, width=0.8\textwidth}
\caption{After the move $R_3$}
\label{fig:2_4}
\end{subfigure}
\caption{\small{
$\Right{D}$ and $\Left{D}$ are drawn by dashed and dotted lines, respectively.
Instead of explicit specifying of the orientation $\Right{D}$ and $\Left{D}$ we draw short arrows which point to input crossings of $4$-crossings patterns
}}
\label{fig:2}
\end{figure}

\end{proof}

\section{An analog of the Kauffman bracket polynomial}
\label{sec:BracketPolynomial}

\subsection{Two types of smoothing of a crossing}
\label{sec:Smoothing}

To use Kauffman's construction of the bracket polynomial \cite{Kauffman1987}
we need to distinguish 
two types of smoothing of a crossing.
We will name them as positive and negative.
Recall that, in the classical case, a smoothing of a crossing $x$ in a diagram $D$ is said to be positive
if it consists in pairing of the following regions adjacent to $x$:
the region lying on the right when we go to $x$ along the overgoing branch of $D$ (the direction in which we go does not matter)
with the region which is opposite to the first one.
The other smoothing is said to be negative.
But, equivalently, one can define positive and negative smoothing via the sign of a crossing
and the notions of smoothing along/across orientation of the diagram.
Below, we use the latter approach.

We begin with some auxiliary terminology.
Let $D \subset \Sigma$ be a diagram representing a pseudo-classical knot.
Consider a crossing $x \in \cross{D}$.
There is a natural correspondence between angles adjacent to $x$ and crossings forming $4$-crossing pattern corresponding to $x$ in $D^2$.
A smoothing of a crossing is a pairing of two vertical angles
or, equivalently, a pairing of two diagonal crossings in the pattern.
Hence, to specify a smoothing, it is sufficient to specify a pair of diagonal crossings (or even one of them only) in the pattern.
We will say that a smoothing of $x$ is \emph{along the orientation} of $D$
if the smoothing is determined by $\Input(x)$
(see Fig.~\ref{fig:3_1}).
The other smoothing is said to be \emph{across the orientation} of the diagram (see Fig.~\ref{fig:3_2}).
Note that $\Input(x)$ and $\Output(x)$ determined the same smoothing.
Below, in figures, we will specify that type of smoothing which we want to use
via a short segment (which can be viewed as a diagonal of the pattern)
having endpoints in those vertical angles
which are pairing in the smoothing.
Therefore, Figs.~\ref{fig:3_3} and~\ref{fig:3_4} represent smoothing of the same type as in Figs.~\ref{fig:3_1} and~\ref{fig:3_2}, respectively.

\begin{figure}[h!]
\centering 
\begin{subfigure}[b]{0.45\textwidth}
\centering
\psfig{figure=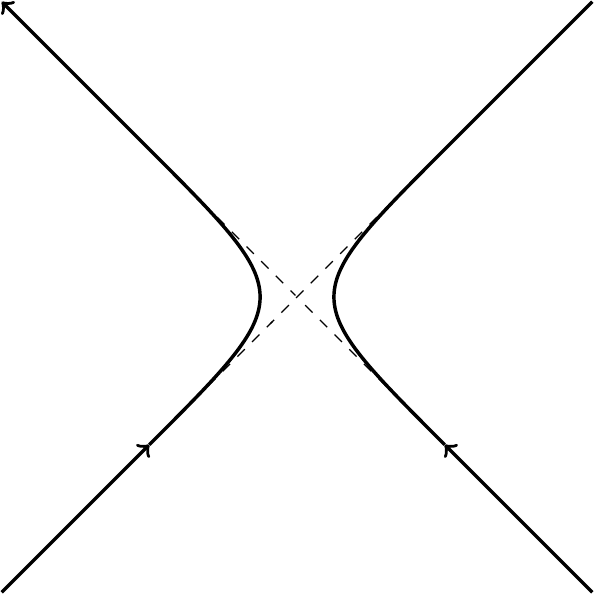, width=0.4\textwidth}
\caption{}
\label{fig:3_1}
\end{subfigure}
\begin{subfigure}[b]{0.45\textwidth}
\centering
\psfig{figure=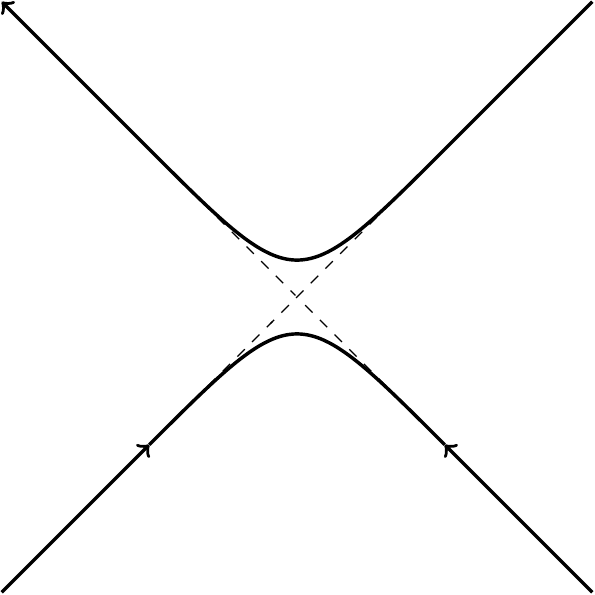, width=0.4\textwidth}
\caption{}
\label{fig:3_2}
\end{subfigure}
\begin{subfigure}[b]{0.45\textwidth}
\centering
\psfig{figure=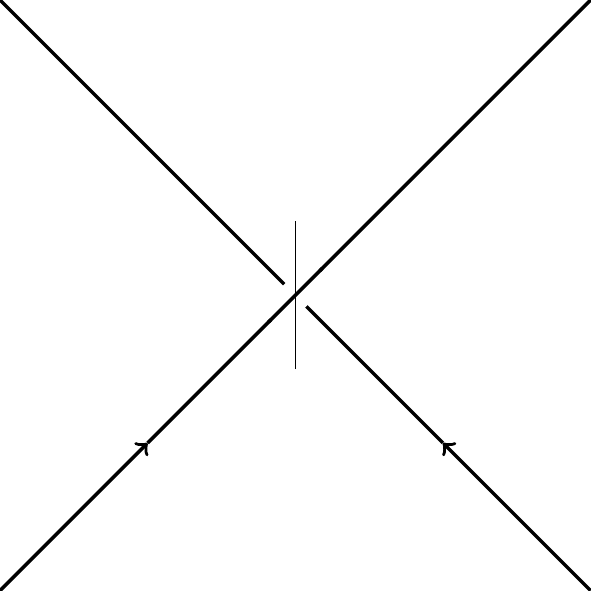, width=0.4\textwidth}
\caption{}
\label{fig:3_3}
\end{subfigure}
\begin{subfigure}[b]{0.45\textwidth}
\centering
\psfig{figure=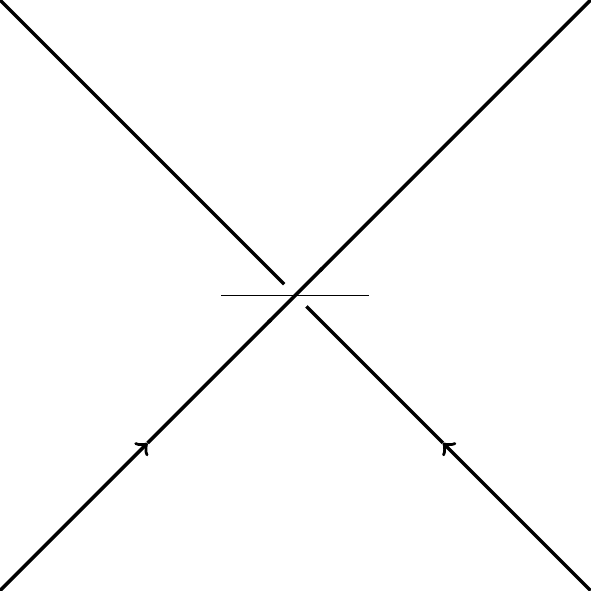, width=0.4\textwidth}
\caption{}
\label{fig:3_4}
\end{subfigure}
\caption{Smoothing along (A) and across (B) the orientation of the diagram
and their simplified drawing (C) and (D), respectively.}
\label{fig:3}
\end{figure}

\begin{definition} \label{def:Smoothing}
Let $D \subset \Sigma$ be an oriented  diagram of an oriented pseudo-classical knot.
A smoothing of  a crossing $x \in \cross{D}$ is called \emph{positive}
if it is determined by the following rule:
\[
\begin{split}
\text{along the orientation } & \text{ if either $\sign(x) =1$ or $\sign(x) =i$,} \\
\text{across the orientation } & \text{ if either $\sign(x) =-1$ or $\sign(x) =-i$.} 
\end{split}
\]
The other  smoothing is called \emph{negative}.
\end{definition}

\begin{lemma} \label{lemma:PropertiesOfSmoothing}
Let $D \subset \Sigma$ be an oriented  diagram of a pseudo-classical knot.
Then,  for a crossing $x \in \cross{D}$
the following holds:

{\bf (1)}.
The reversing orientation of $D$ and the relabeling of $D^2$
make positive smoothing into negative and vice versa,
independently of whether $x$ is of the type~$1$ or of the type~$2$.

{\bf (2)}.
The crossing change operation makes positive smoothing into negative and vice versa
if $x$ is of the type~$1$ and it does not affect the type of smoothing otherwise.

{\bf (3)}.
The positive smoothing in $x$ is determined by that crossing in the corresponding pattern
in which overgoing arc of $\Right{D}$ comes into the pattern,
i.e., actual direction of the undergoing  arc of $D$ and actual labeling of undergoing arcs of $D^2$ in the crossing
do not matter which smoothing in $x$ is positive and which one is negative.
\end{lemma}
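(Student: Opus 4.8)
The plan is to reduce all three statements to a single bookkeeping principle. By Definition~\ref{def:Smoothing}, the positive smoothing at $x$ is completely determined by the pair consisting of the value $\sign(x)$ and the physical ``along/across the orientation'' designation, the latter being fixed by the crossing $\Input(x)$. So I would first isolate how each of the three operations acts on these two pieces of data separately, and then read off the effect on the positive smoothing from Definition~\ref{def:Smoothing}. The key preliminary observation is that the physical smoothing named ``along'' is \emph{unchanged} by all three operations: the crossing $\Input(x)$ is determined only by the directions in which the two branches of $D^2$ enter the $4$-crossing pattern, hence it cannot depend on over/under information (so it survives a crossing change) nor on the labeling (so it survives relabeling); and under reversal of the orientation of $D$ the crossing $\Input(x)$ is exchanged with $\Output(x)$, but these are diagonal and therefore determine the \emph{same} smoothing, so ``along'' still names the same physical smoothing. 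Consequently, in all three cases the positive smoothing flips precisely when $\sign(x)$ jumps between the set $\{1,i\}$ and the set $\{-1,-i\}$.

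Parts~(1) and~(2) then follow immediately from Lemma~\ref{lemma:KnotSignProperties}. Reversing the orientation of $D$ or relabeling $D^2$ multiplies $\sign(x)$ by $-1$ for both types of crossing, hence moves it between $\{1,i\}$ and $\{-1,-i\}$; combined with the invariance of the ``along/across'' designation this interchanges positive and negative, giving~(1). For~(2), a crossing change multiplies the sign by $-1$ when $x$ is of type~$1$ (so the smoothing flips) and preserves the sign when $x$ is of type~$2$ (so the smoothing is unchanged), which is exactly the assertion.

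For part~(3) I would unwind Definition~\ref{def:Smoothing} together with Definition~\ref{def:SignForKnot}. At every crossing of the pattern the overgoing strand is a copy of the overgoing arc of $D$, and that arc doubles into one copy in $\Right{D}$ and one in $\Left{D}$; thus the over-branch at $\Input(x)$ is either the $\Right{D}$-copy or the $\Left{D}$-copy. In the first case $\sign(x)\in\{1,i\}$, the positive smoothing is ``along'', and $\Input(x)$ is exactly the crossing at which the overgoing arc of $\Right{D}$ enters the pattern. In the second case $\sign(x)\in\{-1,-i\}$, the positive smoothing is ``across'', and I would check that the entry crossings of the two parallel over-copies lie on opposite diagonals of the pattern, so that the overgoing arc of $\Right{D}$ again enters precisely on the positive diagonal. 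In both cases the positive smoothing is the one determined by the crossing where the overgoing arc of $\Right{D}$ comes in; since that crossing is merely the first crossing met by this single over-copy, it refers to nothing about the undergoing arc, which yields the stated independence from its direction and from the labeling of the undergoing strands.

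The main obstacle I anticipate is the geometric bookkeeping behind these verifications, especially the ``opposite diagonals'' fact used in~(3) and the need to keep the operations ``reverse orientation of $D$'' and ``relabel $D^2$'' genuinely separate: both negate $\sign(x)$, yet they act quite differently on the entry side and on the identity of the over-$\Right{D}$-copy, and only after fixing the labeling once and for all does the clean principle above emerge. I would carry out these checks directly on the explicit $4$-crossing patterns of Fig.~\ref{fig:1}, treating the two types of crossing in parallel.
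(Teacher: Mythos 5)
Your proposal is correct and follows essentially the same route as the paper: parts (1) and (2) are read off from Lemma~\ref{lemma:KnotSignProperties} together with Definition~\ref{def:Smoothing}, and part (3) is the same two-case analysis according to whether the entry crossing of the overgoing arc of $\Right{D}$ coincides with $\Input(x)$. Your explicit check that the physical ``along/across'' designation is unchanged by all three operations (using that $\Input(x)$ and $\Output(x)$ determine the same smoothing) is left implicit in the paper's proof but is exactly the right justification.
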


\begin{proof}
Pick a crossing $x \in \cross{D}$.

{\bf (1)}.
By Lemma~\ref{lemma:KnotSignProperties},
both transformations (the reversing orientation of $D$ and the relabeling of $D^2$)
multiply $\sign(x)$ by $-1$
independently on whether $x$ is of the type~$1$ or of the type~$2$.
Hence, by Definition~\ref{def:Smoothing},
these transformations make positive smoothing into negative and vice versa.

{\bf (2)}.
This fact is again a direct consequence
of Lemma~\ref{lemma:KnotSignProperties},
which states that the crossing change operation multiplies $\sign(x)$ by $-1$ if $x$ is of the type~$1$
and preserves $\sign(x)$ if $x$ is of the type~$2$.

{\bf (3)}.
Denote by $y$ that crossing
in which overgoing arc of $\Right{D}$ comes into the pattern corresponding to $x$.
There are two possibilities:
either $y$ coincides with $\Input(x)$ or not.
In the first case, by Definition~\ref{def:SignForKnot}, $\sign(x) =1$
and, by Definition~\ref{def:Smoothing}, the positive smoothing is the one along orientation,
i.e., it is determined by $\Input(x) =y$.
In the second case, $\sign(x) =-1$ and the positive smoothing is the one across orientation,
i.e., it is again determined by $y$.
\end{proof}

\begin{remark} \label{remark:RedefinitionOfPositiveSmoothing}
Using the third statement of the above lemma 
we can reformulate our definition of positive smoothing to be like the classical one:
a smoothing in a crossing is positive if it is determined by that angle adjacent to the crossing
which lies to the ``right'' when we go to the crossing along overgoing arc in positive direction.
Here there are two crucial differences with the classical definition.
Firstly, we have to use an ersatz-notion of the ``right'' constructing via a labeling of $D^2$
instead of the natural concept  of right coming from the orientation of underlying surface.
Secondly, in the classical situation the direction along which we go to the crossing does not matter
while in the above definition we have to go along the positive direction if we want to use the ``right'' side only. 
\end{remark}

In the classical situation when both the surface and its thickening are orientable
all crossings are of the type~1 and, consequently, their signs equal $\pm 1$,
and our definition of positive and negative smoothing (Definition~\ref{def:Smoothing}) is equivalent to its classical prototype.
In particular, in the classical case, types of smoothing (in the sense of Definition~\ref{def:Smoothing}) are independent on the orientation of a diagram
because in the orientable situation the reversing the orientation of $D$ implies the relabeling of $D^2$
and, by Lemma~\ref{lemma:PropertiesOfSmoothing}(1), simultaneous performing of these two transformations preserves positive and negative smoothing.

\subsection{The bracket polynomial}
\label{sec:Polynomial}

Below, we define an analog of Kauffman bracket polynomial.
The construction is similar to its classical prototype
(\cite{Kauffman1987}).
The specificity of non-orientable thickening is hidden in using definitions of the sign of a crossing
(Definition~\ref{def:SignForKnot})
and of positive and negative smoothing (Definition~\ref{def:Smoothing}).
The only difference is that we add a factor counting  the signs of crossings of the type~$2$.

Let $R$ be a ring $\mathbb{Z}[u^{\pm 1},v]$ of Laurent polynomials with integer coefficients.

Fix an oriented diagram $D$ of an oriented pseudo-classical knot $K \subset \hat{\Sigma}$.

A \emph{state} of the diagram $D$ is defined to be a mapping S: $\cross{D} \to \{-1,1\}$.
As usual, each state of $D$ gives rise to a collection of pairwise disjoint circles
$\gamma_1(S),\ldots,\gamma_{n(S)}(S) \subset \Sigma$
those appear as a result  of smoothing of all crossings of $D$, the type of smoothing performing in $x \in \cross{D}$ is determined by the value $S(x)$:
the smoothing is positive if $S(x) =1$ and negative otherwise.

Let
\begin{equation} \label{eq:P}
P(S;u) = (-u^2 -u^{-2})^{n(S)-1}\prod_{x \in \cross{D}}u^{S(x)} \in \mathbb{R}
\end{equation}
and
\begin{equation} \label{eq:J}
J(D;u,v) =(-u)^{-3w_1(D)}v^{|w_2(D)|} \sum_{S \in \MyCal{S}(D)}P(S;u) \in \mathbb{R},
\end{equation}
where by $\MyCal{S}(D)$ we denote the set of all states of $D$,
(for definitions of writhe numbers $w_1(D),w_2(D)$ see~\eqref{eq:w_1} and~\eqref{eq:w_2}).

\begin{remark} \label{remark:Relabeling}
The value $J(D;u,v)$ depends on orientation of $D$
and actual labeling of $D^2$.
From Lemma~\ref{lemma:PropertiesOfSmoothing}(1)
and the above definition follows
that the reversing of the orientation of $D$
and the relabeling of $D^2$
imply the substitution $u =u^{-1}$
in the polynomial $J(D; u,v)$.
\end{remark}

\begin{theorem} \label{theorem:J}
If diagrams $D_1,D_2 \subset \Sigma$ represent the same pseudo-classical knot,
then either $J(D_1,;u,v) =J(D_2;u,v)$
or $j(D_1;u,v) =J(D_2;u^{-1},v)$.
\end{theorem}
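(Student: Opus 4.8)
The strategy is to mimic the classical proof of invariance of the Kauffman bracket under the three Reidemeister moves, but to track carefully the extra factor $v^{|w_2(D)|}$ and the possibility that the choice of labeling of $D^2$ forces the substitution $u \mapsto u^{-1}$. First I would reduce the whole problem to checking the behavior of the unnormalized state sum $\langle D\rangle := \sum_{S \in \MyCal{S}(D)} P(S;u)$ together with the two normalizing factors $(-u)^{-3w_1(D)}$ and $v^{|w_2(D)|}$ under each of $R_1, R_2, R_3$. By the remarks in Section~\ref{sec:Knots}, Reidemeister moves together with ambient isotopy suffice, so there are no extra auxiliary moves permuting over/under branches to worry about.

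\textbf{The core computations.}
For each move I would verify the familiar local identities for $\langle D\rangle$, but \emph{using Definition~\ref{def:Smoothing} to decide which smoothing is positive}. The key observation is that once a labeling of $D^2$ is fixed, Lemma~\ref{lemma:PropertiesOfSmoothing}(3) pins down the positive smoothing purely in terms of the overgoing arc of $\Right{D}$, so locally the skein behaves exactly as in the classical case. Concretely: for $R_2$, the two crossings created (or destroyed) are of the same type and, by Lemma~\ref{lemma:w_2}, have opposite signs, so $w_1$ and $w_2$ are unchanged; I would then check that the standard cancellation $u \cdot u^{-1}$ together with the loop factor $(-u^2 - u^{-2})$ produces invariance of $\langle D\rangle$, exactly as classically. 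For $R_3$, again $w_1$ and $w_2$ are unchanged (the move preserves crossing types and signs), and the invariance of $\langle D\rangle$ follows by expanding one crossing and reducing to two applications of the $R_2$-type identity, just as in Kauffman's original argument. For $R_1$, the created crossing is of type~$1$ (so $w_2$ is unaffected), and the state sum picks up a factor $-u^{\pm 3}$ which is precisely cancelled by the change in the normalization $(-u)^{-3w_1(D)}$; this is where the writhe-normalization earns its keep.

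\textbf{The main obstacle.}
The genuinely non-classical point, and the source of the dichotomy in the statement, is that the sign function takes values in $\SignRing$ and depends on a \emph{choice} of labeling of $D^2$ for which there is no canonical convention on a non-orientable surface. So two diagrams of the same knot may be compared only after a possible relabeling and/or reversal of orientation of $D$. By Remark~\ref{remark:Relabeling}, either of these transformations implements the substitution $u \mapsto u^{-1}$ in $J$, while by Lemma~\ref{lemma:w_2} the quantity $|w_2(D)|$ (hence the factor $v^{|w_2(D)|}$) is insensitive to such a relabeling. Thus the plan is: first establish that $J$ is invariant under the three Reidemeister moves \emph{for a fixed labeling}, which gives a well-defined polynomial once a labeling and orientation are chosen; then account for the ambiguity of that choice. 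Two isotopic diagrams, after matching up their labelings, either agree outright or differ by exactly one relabeling/reversal, and in the latter case Remark~\ref{remark:Relabeling} yields $J(D_1;u,v) = J(D_2;u^{-1},v)$. I expect the delicate bookkeeping to lie precisely in showing that the Reidemeister invariance proof does not secretly depend on the labeling in a way that would create further substitutions beyond the single $u \mapsto u^{-1}$; the type~$2$ crossings, whose signs are $\pm i$ and which can be self-crossings of a single component of $D^2$, must be checked to behave consistently in the $R_2$ and $R_3$ moves, and this is the step I would treat most carefully.
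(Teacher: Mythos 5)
Your overall strategy coincides with the paper's: fix an orientation and a labeling of $D^2$, verify invariance of the state sum and the two normalizing factors under $R_1$, $R_2$, $R_3$ using Definition~\ref{def:Smoothing} and Lemma~\ref{lemma:PropertiesOfSmoothing}, and then absorb the non-canonical choice of labeling into the stated dichotomy via Remark~\ref{remark:Relabeling} and Lemma~\ref{lemma:w_2}. Your treatments of $R_1$ and $R_2$ match the paper's essentially verbatim.

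The one place where your plan stops short of a proof is precisely the step you flag as delicate: $R_3$. You propose to expand one crossing and reduce to two applications of the $R_2$-type identity, ``just as in Kauffman's original argument,'' but in the non-orientable setting that reduction is not automatic, and the obstruction is not really about type-$2$ crossings per se. The paper isolates the exact failure mode: in the unique combination of smoothings producing an extra circle, Lemma~\ref{lemma:PropertiesOfSmoothing}(3) forces the two crossings sharing the same overgoing strand to receive opposite smoothings, but the \emph{third} smoothing in that combination can be either positive or negative --- a case that cannot occur classically and that would spoil the word-for-word classical computation. The paper's repair is a relabeling of $D^2$: this converts the bad case into the good one, changes $J$ only by the substitution $u \mapsto u^{-1}$ (Remark~\ref{remark:Relabeling}), and invariance of the relabeled polynomial implies invariance of the original. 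Your proposal contains neither the identification of this failure mode nor the relabeling device that fixes it, so as written the $R_3$ step is a genuine gap rather than routine bookkeeping; the rest of the argument is sound and follows the paper's route.
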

Therefore, the polynomial $J(K;u,v)$ is an isotopy invariant of a pseudo-classical knot $K \subset \hat{\Sigma}$
up to substitution $u=u'^{-1}$.

\begin{remark} \label{remark:DrawingConvention}
Below, to make figures simpler, we use the following conventions.

1. If $\Right{D}$ goes to the right of an arc of oriented diagram $D$ 
then the arc is drawn by solid line,
otherwise when $\Right{D}$ goes to the left the arc is drawn by dashed line.
Here we mean the standard notion of ``to the right'' coming from the standard orientation of the plane of drawing.

2. Instead of explicit specifying the orientation of a diagram,
we draw short arrows which points to input crossings.

These conventions together with standard over/under-information
allow seeing both the type ($1$ or $2$)
and the sign of depicted crossings
(see Fig.~\ref{fig:4}).
\end{remark}

\begin{figure}[h!]
\centering 
\begin{subfigure}[b]{0.45\textwidth}
\centering
\psfig{figure=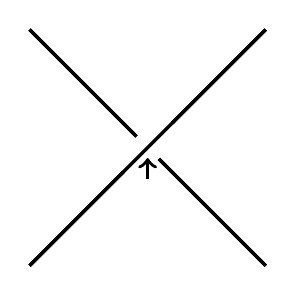, width=0.38\textwidth}
\caption{}
\label{fig:4_1}
\end{subfigure}
\begin{subfigure}[b]{0.45\textwidth}
\centering
\psfig{figure=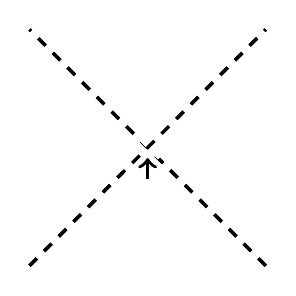, width=0.38\textwidth}
\caption{}
\label{fig:4_2}
\end{subfigure}
\begin{subfigure}[b]{0.45\textwidth}
\centering
\psfig{figure=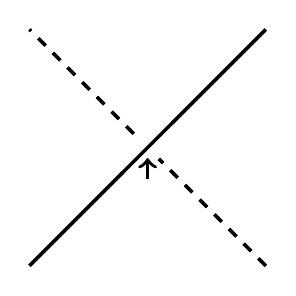, width=0.38\textwidth}
\caption{}
\label{fig:4_3}
\end{subfigure}
\begin{subfigure}[b]{0.45\textwidth}
\centering
\psfig{figure=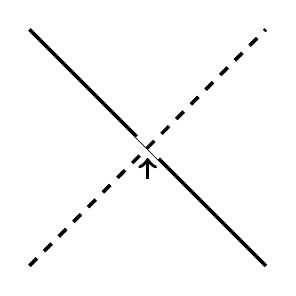, width=0.38\textwidth}
\caption{}
\label{fig:4_4}
\end{subfigure}
\caption{\small{
Depicted crossings are of the type~$1$ in (a), (b) and they are of the type~$2$ in (c),(d).
Their signs are equal to $1,-1,i,-i$, respectively.
}}
\label{fig:4}
\end{figure}

\begin{proof}
The proof below is very close to the one in the classical case.
The invariance of our polynomial $J$ under Reidemeister moves $R_1 -- R_3$ will be established by the same computations.
We only need to explain why we can perform them in our case.

Fix an oriented diagram $D \subset \Sigma$ of an oriented pseudo-classical knot
and a labeling of $D^2$.

{\bf The move $\mathbf{R_1}$}.
By definition of the move, a loop which the move adds/removes bounds a disk,
hence (see Remark~\ref{remark:PathsAndTypeOfCrossing})
a crossing that the move adds/removes is of the type~$1$
and its sign is equal to $\pm 1$.
There are $4$ situations depending on the value of the sign of the crossing
and what passage (over or under) through the crossing we meet first going along modifying fragment in the positive direction.
In all cases,  the invariance of the polynomial $J(u,v)$ under the move $R_1$ can be established 
using the same arguments as in the classical situation.
However, it is necessary to remember that what of smoothing are positive is determined by actual  labeling of $D^2$.
For example, the tangle shown in Fig.~\ref{fig:11_1}
is similar to that in the classical situation:
the sign of the crossing is equal to $+1$ and positive smoothing cuts a circle.
Fig.~\ref{fig:11_2} illustrates the other case: the sign of the crossing is equal to $-1$ and the smoothing cutting a circle is negative.
However, the computations proving invariance of the polynomial $J$ under the move $R_1$ is true for all possible situations
because, by our definitions, the type of smoothing and the sign of the crossing alter together.
Let us, as an example, briefly consider the situation shown in Fig.~\ref{fig:11_1}.

Denote by $D$ and $D'$ the diagram before and after adding the loop, respectively.
We have
\[
\begin{split}
J(D';u,v) =J(D;u,v)(u +u^{-1}(-u^2-u^{-2})) =\\
=-J(D;u,v) u^{-3} =(-u)^{-3} J(D;u,v).
\end{split}
\]
Hence, $J(D;u,v) =(-u)^3 J(D';u,v)$.
The latter equality implies that the move preserves the polynomial
because, in this situation, $w_1(D') =w_1(D) -1$.
Note that the above computation word-for-word coincides with the one in the classical case
when the crossing in question has the sign equal $-1$.

\begin{figure}[h!]
\centering 
\begin{subfigure}[b]{0.45\textwidth}
\centering
\psfig{figure=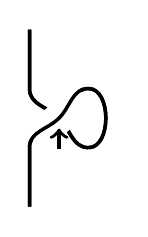, width=0.38\textwidth}
\caption{$\sign =1$}
\label{fig:11_1}
\end{subfigure}
\begin{subfigure}[b]{0.45\textwidth}
\centering
\psfig{figure=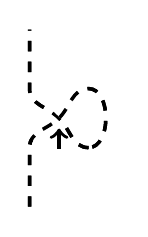, width=0.38\textwidth}
\caption{$\sign =-1$}
\label{fig:11_2}
\end{subfigure}
\caption{
The smoothing cutting a circle is positive in (a) and the one is negative in (b)
}
\label{fig:11}
\end{figure}

{\bf The move ${\mathbf{R_2}}$}.
Totally we have $16$ variants of the corresponding tangles
depending on the following:
\begin{itemize}
\item modifying strands are oriented in the same direction   or not,
\item what of them goes over,
\item  what components of $2$-cabling pass through input crossings,
\item what of these components goes over (if the two are distinct).
\end{itemize}
Both of the crossings which the move adds/removes are of the same type (either~$1$ or~$2$).
In all cases, one of two smoothings those cut the bigon and give an additional circle
is positive, while the other one is negative.
Using the observation, we can establish the invariance of $J(u,v)$ under the move $R_2$
by the same computations as in the classical situation.
Again, the computation coincides with the one in some of $4$ situations possible in classical case.
As an example, we consider a variant of the move depicted in Fig.~\ref{fig:12},
which illustrates the specificity of non-orientable surface.
Both of the crossings in Fig.~\ref{fig:12} are of the type~$2$,
$\sign(x) =i, \sign(y) =-i$. 
To obtain an additional circle, we need to perform the positive smoothing in $x$ and the negative one in $y$.
Two positive and two negative smoothings gives isotopic fragments.
The negative smoothing in $x$ and the positive smoothing in $y$ give a fragment isotopic to the fragment before the move.
Therefore, we have
\[
J(D';u,v) =u^{-1}u J(D;u,v) +J(D'';u,v)(u^2+u^{-2}-u^2-u^{-2}) =J(D;u,v),
\]
where $D$ and $D'$ --- diagrams before and after the move, respectively,
by $D''$ we denote the diagram that appears as a result of performing positive smoothings both in $x$ and in $y$ in the diagram $D'$.

\begin{figure}[h!]
\centerline{\psfig{figure=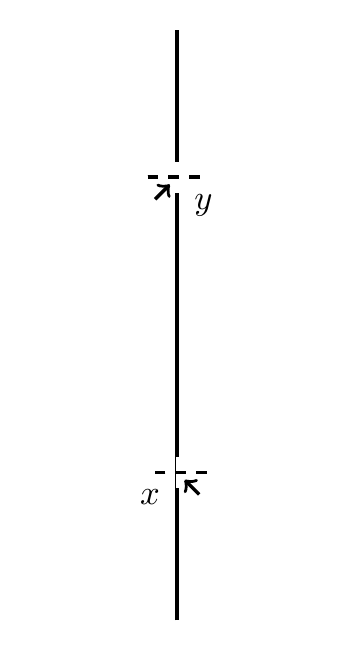}} 
\caption{An example of the move $R_2$}
\label{fig:12}
\end{figure}

{\bf The move ${\mathbf{R_3}}$}.
To perform the standard computation, which proves the invariance of the classical bracket polynomial under the move $R_3$,
it is sufficient to have the following two facts:
\begin{enumerate}
\item The modifying tangle both before and after the transformation admits a unique combination of smoothings
giving an additional circle,
and two of these three smoothings are positive while the third one is negative.
\item by construction an additional circle gives the additional factor $-t^2-t^{-2}$.
\end{enumerate}
Check that these facts hold in our situation.
Indeed, modifying tangle contains two crossings, in which the same strand of the diagram $D$ goes over,
and Lemma~\ref{lemma:PropertiesOfSmoothing}(3)
implies that to obtain an additional circle we need to perform different smoothings in these crossings
(positive in one of them and negative in the other).
In the case under consideration (unlike the classical one), the third smoothing in the combination giving the circle
can be both positive and negative.
If it is positive, the computation establishing the invariance of the polynomial $J$
word-for-word coincides with the one in the classical case.
The situation when the third smoothing is negative can be transformed into the previous one by the relabeling of $D^2$.
The transformation changes the polynomial $J$
(see Remark~\ref{remark:Relabeling}),
however, the invariance of the transformed polynomial under the move $R_3$ implies the invariance of the initial polynomial.

Finally, note that,
firstly, moves $R_2$ and $R_3$ does not change writhe numbers $w_1$ and $w_2$,
secondly, since there is no canonical labeling of $D^2$
we need to use the absolute value of the writhe number $w_2$ and to consider the polynomial up to the substitution $u =u^{-1}$.
This completes the proof of Theorem~\ref{theorem:J}.
\end{proof}

Let $K \subset \hat{\Sigma}$ be a non-oriented pseudo-classical knot.
In this case, we define $J(K;u,v)$ as $J(K;u,v) =J(k';u,v)$
where $K'$ is $K$ with an arbitrary orientation.
Then, by Theorem~\ref{theorem:J}
and Remark~\ref{remark:Relabeling},
we have the following:

\begin{corollary} \label{corollary:NonOriented}
The polynomial $J(K;u,v)$ is an isotopy invariant of a non-oriented pseudo-classical knot $K \subset \hat{\Sigma}$
up to the substitution $U =u^{-1}$.
\end{corollary}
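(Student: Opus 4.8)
The plan is to deduce the statement as a short consequence of Theorem~\ref{theorem:J} and Remark~\ref{remark:Relabeling}; the only genuinely new ingredient, relative to the oriented case, is the dependence on the choice of orientation that is inherent in passing to the non-oriented setting. I regard a value $J(K;u,v)$ as determined by three auxiliary choices: a diagram $D \subset \Sigma$ of $K$, a labeling of its $2$-cabling $D^2$, and an orientation of $D$. The goal is to show that each of these choices affects the outcome by at most the substitution $u \to u^{-1}$.

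First I would fix an orientation of $K$ and appeal to Theorem~\ref{theorem:J}: for this fixed orientation, $J(D;u,v)$ is independent, up to $u \to u^{-1}$, of the diagram representing the oriented knot, the dependence on the labeling having already been absorbed into that same ambiguity in the proof of the theorem. This settles the diagram and labeling choices and reduces the whole claim to controlling the passage between the two orientations of $K$.

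Next I would treat the orientation. On a single diagram $D$, the two orientations of $K$ are realized by reversing the orientation of $D$ while keeping the labeling of $D^2$ fixed; this is legitimate because, in the non-orientable situation, the labeling is a free convention rather than a datum determined by the orientation, as it would be classically. By Remark~\ref{remark:Relabeling} --- which rests on Lemma~\ref{lemma:PropertiesOfSmoothing}(1), stating that reversing the orientation of $D$ interchanges positive and negative smoothings, together with the facts that $w_1(D)$ changes sign and $|w_2(D)|$ is preserved (Lemma~\ref{lemma:w_2}) --- this reversal replaces $J(D;u,v)$ by $J(D;u^{-1},v)$. Hence the two orientations yield values related by the substitution $u \to u^{-1}$.

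Combining the two steps, every admissible choice changes $J$ by at most $u \to u^{-1}$, so $J(K;u,v)$ is a well-defined isotopy invariant of the non-oriented knot up to that substitution. The one step deserving care is the orientation step: one must check that reversing the orientation of the underlying knot acts on a chosen diagram simply as orientation reversal of $D$, with any attendant freedom in the labeling already subsumed in the $u \to u^{-1}$ ambiguity. All of the substantive work has already been carried out in Theorem~\ref{theorem:J}, so the corollary itself is essentially a bookkeeping of these ambiguities.
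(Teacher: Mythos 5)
Your argument is correct and follows the same route as the paper: the corollary is obtained by fixing an arbitrary orientation, invoking Theorem~\ref{theorem:J} for diagram (and labeling) independence, and using Remark~\ref{remark:Relabeling} to absorb the orientation reversal into the substitution $u \mapsto u^{-1}$. The paper's own justification is exactly this one-line appeal to Theorem~\ref{theorem:J} and Remark~\ref{remark:Relabeling}, which you have merely unpacked.
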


\begin{remark} \label{remark:CrossingChange}
By Lemma~\ref{lemma:PropertiesOfSmoothing}(2),
the crossing change operation in a crossing of the type~$2$
does not affect the type of smoothing in the crossing
and, by Lemma~\ref{lemma:KnotSignProperties}(2),
it preserves the writhe number $w_2(D)$.
Hence, the crossing change operation preserves the polynomial $J$.
Therefore, the polynomial seems to be not very informative
if diagrams under consideration has many crossings of the type~$2$.
The example in Section~\ref{sec:J_ast is stronger} below
illustrates the fact.
\end{remark}

\subsection{A generalization of the polynomial $J$}
\label{sec:Generalization}

In \cite{KauffmanDye2005} Kauffman and Dye proposed a generalization of the Kauffman bracket polynomial
for knots and links in a thickened orientable surface.
The generalized polynomial takes value not in the ring of Laurent polynomial with integer coefficients
but in the free module (over the latter ring) generated by the set of homology (or homotopy) classes of circles on the surface.
The polynomial $J$ admits the same generalization.
To this end, we need to modify the monomial corresponding to each state of the diagram in question
(see~\eqref{eq:P}).
Namely, in the base version of the polynomial, each circle in the state adds the polynomial factor $-u^2-u^{-2}$.
In the generalized version, each homologically (or homotopically) trivial circle again adds to the monomial the factor $-u^2-u^{-2}$
while each non-trivial circle adds a factor that is the generator of the module corresponding to the homology (or homotopy) class of the circle.

\section{The polynomial $J(u,v)$ vs. Kauffman bracket polynomial of double cover}
\label{sec:vs}

Consider a non-orientable surface $\Sigma$
and a diagram $D \subset \Sigma$ of a knot $K \subset \hat{\Sigma}$.
Let an orientable  surface $\Sigma_*$ be the double cover of $\Sigma$
and $D_* \subset \Sigma_*$ denotes the diagram representing a knot (or a link) $K_* \subset (\Sigma_* \times [0,1])$
those are double covers of $D$ and $K$, respectively
(here speaking of  double covers of a diagram, a knot and a surface we mean mappings coming from the same double cover of the corresponding thickened surfaces).
It is well-known that there is a generalization of the Kauffman bracket polynomial
 for knots and links in a thickened orientable surface.
We denote the generalization by $J_*$.
Clearly, $J_*(K_*)$ is an invariant of $K$.
In the case when $K$ is pseudo-classical, we have defined the above invariant $J$,
and it is natural to ask
whether one of these two invariants ($J$ and $J_*$) is a consequence of the other in the particular case of  pseudo-classical knots?
In this section, we show that the answer is negative.
More precisely, below we consider two examples:
in the first one (Section~\ref{sec:J_ast is stronger}) $J_*$ distinguishes two given knots while $J$ does not,
and in the second one (Section~\ref{sec:J is stronger}), on the contrary, $J$ does while $J_*$ does not.

In both examples $\Sigma$ is the Klein bottle and $\Sigma_*$ is the torus.
In our figures the Klein bottle is represented by a rectangle
of which vertical sides are assumed to  be identified  via parallel translation
while its horizontal sides are assumed to be identified with a twist, i.e.,  via the composition of parallel translation
and the reflection over a vertical line.
The torus is represented by a rectangle, of which opposite sides (both pairs) are assumed to be identified via parallel translation.

In all cases, $D_*$ is obtained from $D$ via the following procedure:
we take two copies of a rectangle containing the diagram $D$,
reflect the second copy over a vertical line
and identify top side of the first copy  with bottom side of the second one.

Before considering examples, we need to make a remark on  the construction of the Kauffman bracket polynomial.
It is well-known that, in the classical case, one can define the Kauffman bracket polynomial for non-oriented  knots,
at the same time, in the case of links orientations of components matter.
The reason is that 
the factor that makes the polynomial invariant under the first Reidemeister move
depends on the writhe number of the diagram under consideration,
while the sign of an intersection point of distinct components
has sense for oriented links only.
To make the Kauffman bracket polynomial into an invariant for non-oriented links,
it is sufficient to ignore intersections of distinct components
when we compute the factor.
however, usually, the signs of these crossings are taken into account in the construction of the polynomial
and below we follow the tradition.
At the same time, in all cases below diagrams which are double cover
by construction (the union of a diagram with its mirror reflection) have writhe number equals zero
hence both approaches to the definition of the Kauffman bracket polynomial
(including and excluding the signs of intersection points of distinct components)
give polynomials having coinciding values on the diagrams in question.

\subsection{An example when $J_*$ is stronger than $J$}
\label{sec:J_ast is stronger}

Consider two diagrams on the Klein bottle:
$D_1$ (Fig.~\ref{fig:5_1}) and $D_2$ (Fig.~\ref{fig:5_2}).
Each of these diagrams has two intersections with the horizontal side of the rectangle representing the Klein bottle
(recall that horizontal sides are assumed to be identified with a twist)
hence knots represented by these diagrams are pseudo-classical,
and the polynomial $J$ has sense for both  of them.

Note that $D_1$ can be obtained from $D_2$
as a result of two transformations:
the crossing change operation in the crossing $u$
and then the second Reidemeister move removing $u$ and $v$.

Components of $2$-cabling of a diagram on the Klein bottle permute
when the diagram passes through the horizontal sides of the depicted rectangle,
hence the diagram $D_2$ has $3$ crossings (including $u$ and $v$) of the type~$2$.
As we mentioned above in Remark~\ref{remark:CrossingChange}
the crossing change operation in a crossing of the type~$2$
preserves the polynomial $J$.
The second Reidemeister move preserves $J$ also.
Therefore, both transformations which make $D_2$ into $D_1$ preserve the polynomial $J$,
thus $J(D_1) =J(D_2)$.

\begin{figure}[h!]
\centering 
\begin{subfigure}[b]{0.45\textwidth}
\centering
\psfig{figure=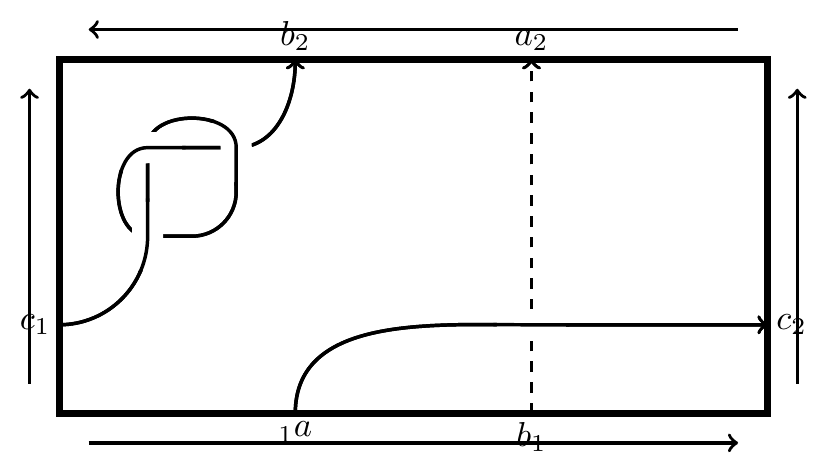, width=1\textwidth}
\caption{$D_1$}
\label{fig:5_1}
\end{subfigure}
\begin{subfigure}[b]{0.45\textwidth}
\centering
\psfig{figure=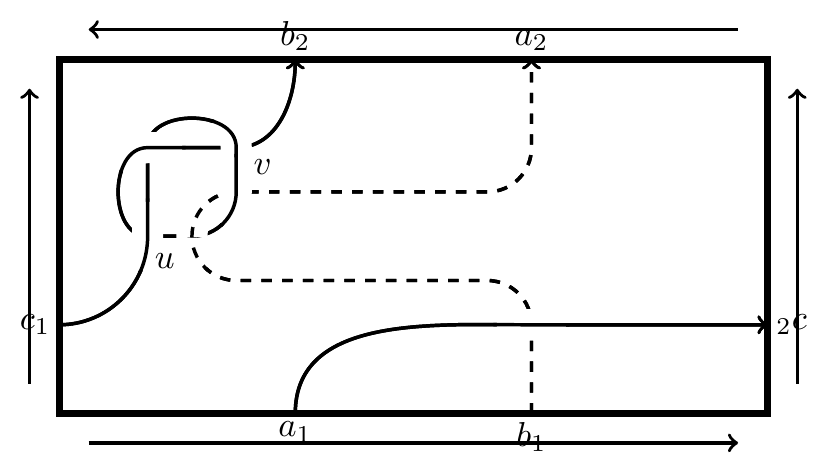, width=1\textwidth}
\caption{$D_2$}
\label{fig:5_2}
\end{subfigure}
\caption{Diagrams on the Klein bottle}
\label{fig:5}
\end{figure}

Diagrams $D_{1*}$ (Fig.~\ref{fig:6_1}) and $D_{2*}$ (Fig.~\ref{fig:6_2}) on the torus
are double covers of $D_1$ and $D_2$, respectively.
$D_{1*}$ has $8$ crossings, $D_{2*}$ has $12$ crossings,
 so the computation of $J_*(D_{1*})$ and, especially, $J_*(D_{2*})$ is too cumbersome
to be made by hand.
The values below are computed by ``3--manifold recognizer'' \cite{Recognizer} ---
a computer program for studying $3$--manifolds and knots.
\[
\begin{split}
J_*(D_{1*}) &=t^{-14}-2t^{-2}-2t^2+t^{14}, \\
J_*(D_{2*}) &=t^{-18}-t^{-14}+t^{-10}-t^{-6}-t^{-2}-t^2-t^6+t^{10}-t^{14}+t^{18}.
\end{split}
\]
(In Appendix below,
we give source data
needed for computing the above values in the format acceptable by the program ``3--manifold recognizer''.)
We see that these two polynomials are distinct.

\begin{figure}[h!]
\centering 
\begin{subfigure}[b]{0.45\textwidth}
\centering
\psfig{figure=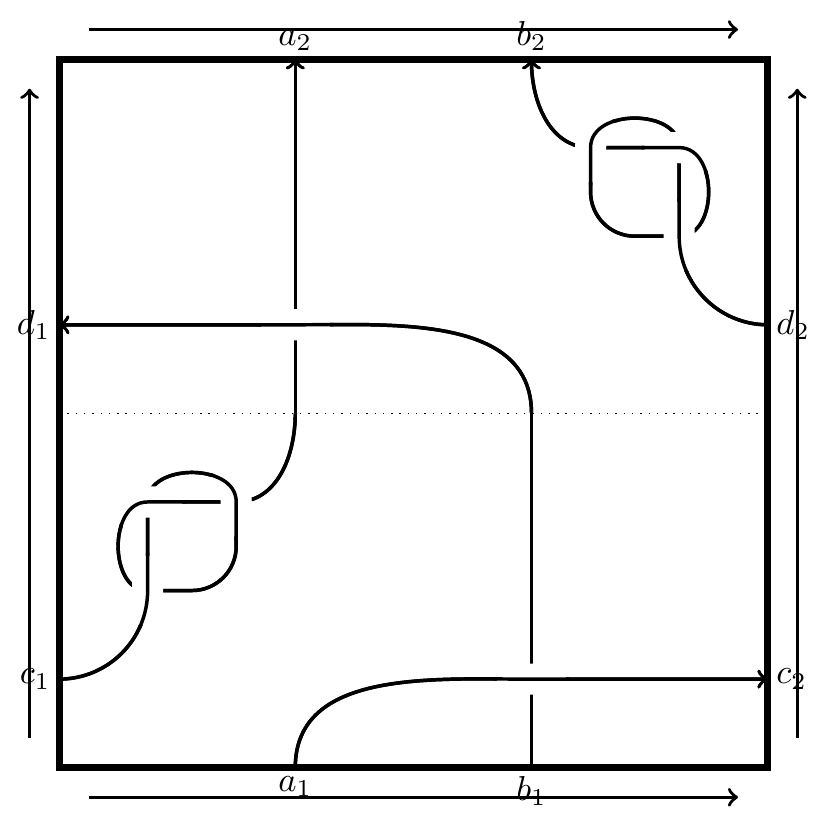, width=1\textwidth}
\caption{$D_{1*}$}
\label{fig:6_1}
\end{subfigure}
\begin{subfigure}[b]{0.45\textwidth}
\centering
\psfig{figure=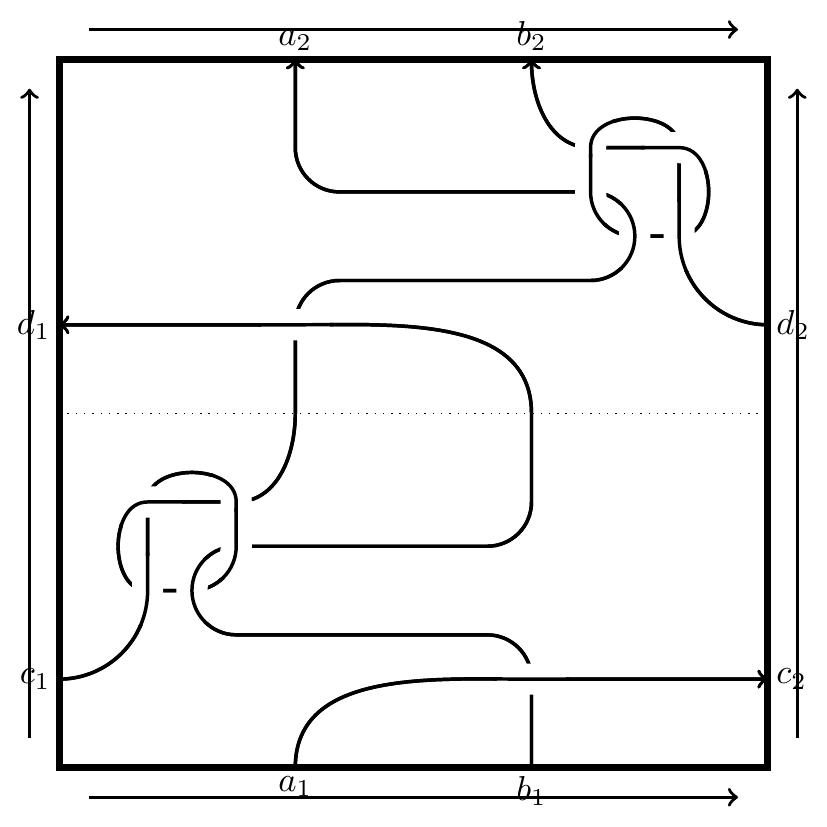, width=1\textwidth}
\caption{$D_{2*}$}
\label{fig:6_2}
\end{subfigure}
\caption{Diagrams on the torus which are the double cover of $D_1$ and $D_2$, respectively}
\label{fig:6}
\end{figure}

\begin{figure}[h!]
\centering 
\begin{subfigure}[b]{0.45\textwidth}
\centering
\psfig{figure=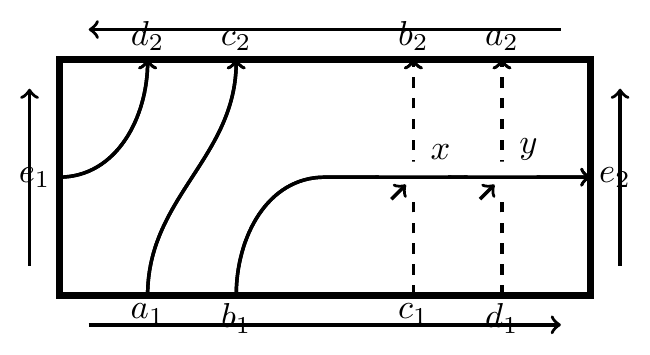, width=1\textwidth}
\caption{$D_3$}
\label{fig:7_1}
\end{subfigure}
\begin{subfigure}[b]{0.45\textwidth}
\centering
\psfig{figure=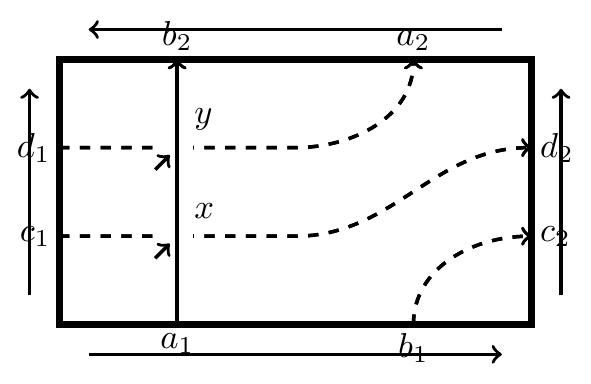, width=1\textwidth}
\caption{$D_4$}
\label{fig:7_2}
\end{subfigure}
\caption{Diagrams on the Klein bottle so that $J(D_3) \neq J(D_4)$.}
\label{fig:7}
\end{figure}

\begin{figure}[h!]
\centering 
\begin{subfigure}[b]{0.45\textwidth}
\centering
\psfig{figure=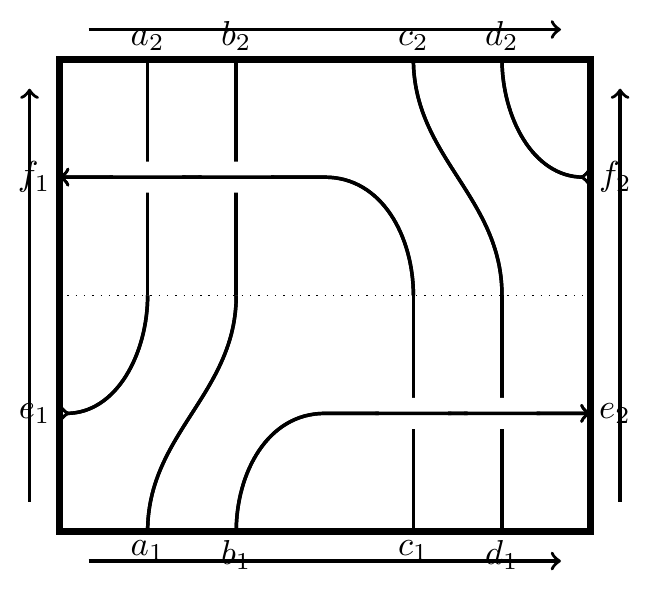, width=1\textwidth}
\caption{$D_{3*}$}
\label{fig:8_1}
\end{subfigure}
\begin{subfigure}[b]{0.45\textwidth}
\centering
\psfig{figure=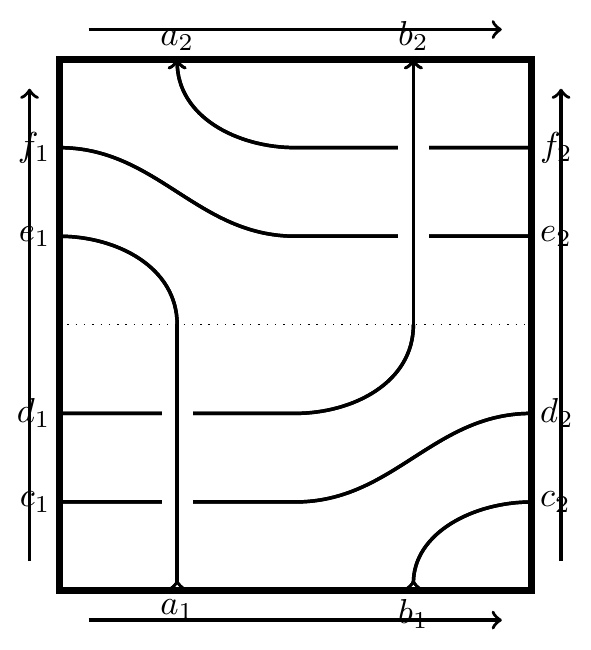, width=1\textwidth}
\caption{$D_{4*}$}
\label{fig:8_2}
\end{subfigure}
\caption{Diagrams on the torus, which are the double cover of $D_3$ and $D_4$, respectively.
$D_{4*}$ obtains (up to orientation of components) from $D_{3*}$ by rotation by $90^{\circ}$ in the clockwise direction.}
\label{fig:8}
\end{figure}

\subsection{An example when $J$ is stronger than $J_*$}
\label{sec:J is stronger}

The diagrams $D_3$ (Fig.~\ref{fig:7_1}) and $D_4$ (Fig.~\ref{fig:7_2}) on the Klein bottle
are so that $J(D_3) \not=J(D_4)$
(the fact will be established below).
At the same time, the diagrams $D_{4*}$ (Fig.~\ref{fig:8_1}) and $D_{4*}$ (Fig.~\ref{fig:8_2}) on the torus,
which are double covers of $D_3$ and $D_4$, respectively,
represent $2$-component links in the thickened torus,
which have coinciding values of $J_*$.
To establish the equality $J_*(D_{3*}) =J_*(D_{4*})$, 
it is sufficient to observe that
$D_{4*}$ obtains from $D_{3*}$ as a result of the  rotation by $90^{\circ}$ in the clockwise direction
and the reversing the orientation of that component
which in Fig.~\ref{fig:8_1} passes through points $a_1=a_2,b_1=b_2,c_1=c_2$.
The fact implies the required equality
because, as we mentioned in the beginning of Section~\ref{sec:vs},
in the case under consideration, the value of the polynomial $J_*$ does not depend on the orientation of link components.
One can prove that links in the thickened torus represented by $D_{3*}$ and $D_{4*}$ are inequivalent as oriented links
but it is not necessary in our context.

Now, we compute $J(D_3)$ and $J(D_4)$.

\subsubsection{Computation of $J(D_3)$} The diagram $D_3$ (Fig.~\ref{fig:7_1}) has $2$ crossings denoted by $x$ and $y$ hence
to compute $J(D_3)$ we need to consider $4$ states $S_1,\ldots,S_4$ of the diagram 
which map the pair of its crossings $(x,y)$, respectively, to  the pairs
$(1,1)$, $(1,-1)$, $(-1,1)$, $(-1, -1)$.
Below, circles those appear as a result of smoothings  $n(S_j)$ (for shortness the parameter  $S_j$ will be omitted)
are presented as the union of arcs having endpoints in the sides of the depicted rectangle.
For the definition of the polynomial $P(S;u)$ corresponding to each state of a diagram see~\eqref{eq:P}.

$S_1$: $(1, 1)$
$n(S_1)=3$, 
\\$\gamma_1=[a_1, c_2] \cup [c_1,a_2]$,
$\gamma_2 =[b_1,b_2]$,
$\gamma_3 =[d_1, e_2] \cup [e_1, d_2]$, 
\\$P(S_1;u)=
u^2(-u^2-u^{-2})^2
=u^6+2u^2+u^{-2}$;

$S_2$:  $(1, -1)$
$n(S_2)=2$, 
\\$\gamma_1=[a_1,c_2] \cup [c_1,d_1] \cup [d_2,e_1] \cup [e_2,a_2]$,
$\gamma_2=[b_1,b_2]$,
\\$P(S_2;u)=
u \, u^{-1}(-u^2-u^{-2}) 
=-u^2 -u^{-2}$;

$S_3$:  $(-1, 1)$
$n(S_3)=2$, 
\\$\gamma_1=[a_1,c_2] \cup [c_1,b_1] \cup [b_2,a_2]$,
$\gamma_2=[d_1,e_2] \cup [e_1,d_2]$,
\\$P(S_3;u)=
u^{-1} u(-u^2-u^{-2}) 
=-u^2 -u^{-2}$;

$S_4$:  $(-1, -1)$
$n(S_4)=1$,
\\$\gamma_1=[a_1,c_2] \cup [c_1,b_1] \cup [b_2,d_1] \cup [d_2,e_1] \cup [e_2,a_2]$,
\\$P(S_4;u)=
u^{-2}$.

To compute the resulting value of $J(D_3)$ we need to know values of writhe numbers $w_1$ (see~\eqref{eq:w_1}) and $w_2$ (see~\eqref{eq:w_2}).
The diagram $D_3$ has $2$ crossings, both  are of the type~$2$,
hence $w_1(D_3)=0$.
Input crossings $\Input(x)$ and $\Input(y)$ are self-intersections of $\Right{D_3}$
thus $\sign(x) =\sign(y) =i$ and $w_2(D_3)=2$.

Therefore (see~\eqref{eq:J}), 

\[
\begin{split}
J(D_3;u,v) 
=u^{0}v^2 \sum P(S_j,u) =\\ 
=v^2(
u^6+2u^2+u^{-2}
-u^2 -u^{-2}
-u^2 -u^{-2}
+u^{-2})  =\\
=u^6v^2.
\end{split}
\]

\subsubsection{Computation of $J(D_4)$}
The diagram $D_4$ (Fig.~\ref{fig:7_2}) has $2$ crossings denoted by $x$ and $y$ hence
to compute $J(D_4)$ we need to consider $4$ states of the diagram.

$S_1$: $(1, 1)$
$n(S_1)=1$,
\\$\gamma_1=[a_1,f_2] \cup [f_1,b_2] \cup [b_1,e_2] \cup [e_1,a_2]$,
\\$P(S_1;u)
=u^2$;

$S_2$:  $(1, -1)$
$n(S_2)=1$,
\\$\gamma_1=[a_1,f_2] \cup [f_1,e_1] \cup [e_2,b_1] \cup [b_2,a_2]$,
\\$P(S_2;u)
=u\, u^{-1}
=1$;

$S_3$:  $(-1, 1)$
$n(S_3)=1$, 
\\$\gamma_1=[a_1,e_1] \cup [e_2,b_1] \cup [b_2,f_1] \cup [f_2,a_2]$,
\\$P(S_3;u)
=u^{-1}u
=1$;

$S_4$:  $(-1, -1)$
$n(S_4)=2$,
\\$\gamma_1=[a_1,e_1] \cup [e_2,b_1] \cup [b_2,a_2]$,
$\gamma_2=[f_1,f_2]$,
\\$P(S_4;u)
=u^{-2}(-u^2-u^{-2})
=-1-u^{-4}$.

The writhe numbers have the same values as above,
hence
\[
\begin{split}
J(D_4;u,v)
=v^2(
u^2
+1
+1
-1-u^{-4}
) =\\
=v^2(u^2+1-u^{-4}).
\end{split}
\]

Therefore, $J(D_3) \not=J(D_4)$
and the substitution $u =u^{-1}$ does not transform one of them into another.
Thus, by Theorem~\ref{theorem:J},
 knots in (the Klein bottle) $\times [0,1]$ represented by these diagrams are inequivalent.

\section*{Appendix: Source data for a computation via the computer program ``3--manifold recognizer''}
\label{sec:Appendix}

To compute the Kauffman bracket polynomial of a knot (or a link) in the thickened torus
via the computer program ``3--manifold recognizer''
\cite{Recognizer}
it is necessary to describe the  knot (or link) in question using an extended Gauss code.
Namely, we begin with numbering of the crossings of the diagram by natural numbers from $1$ in increasing order.
The crossings of the diagram with sides of the rectangle in which the diagram lies
are denoted by a letter and a number which follows the letter without space between them.
Letters $l,t,r,b$ mean that the corresponding crossing lies on the left, top, right, bottom side of the rectangle, respectively.
The number is the number of the crossing in corresponding side.
For all $4$ sides, these numbers increase in the clockwise direction.
For example, l1 denotes the lowermost intersection with the left side
while r1 denotes the topmost intersection with the right one,
t1 denotes the leftmost intersection with the top side
while b1 denotes the rightmost intersection with the bottom one.
The meaning of all other lines in the following description of the links
seems to be clear.

{\bf The description of the diagram $\mathbf{D_1*}$}.
(Fig~\ref{fig:6_1})

\begin{verbatim}
link  T^2
crossings 8
signs 1 1 1 1 -1 -1 -1 -1
code 1 -2 3 -1 2 -3 -8 t1 b2 4 r2 l1
code -4 8 l2 r1 5 -6 7 -5 6 -7 t2 b1
end
 \end{verbatim}

{\bf The description of the diagram $\mathbf{D_2*}$}.
(Fig~\ref{fig:6_2})

\begin{verbatim}
link  T^2
crossings 12
signs 1 1 1 1 -1 -1 -1 -1 1 1 -1 -1
code 1 -2 3 9 -10 -1 2 -3 -8 12 -11 t1 b2 4 r2 l1
code -4 10 -9 8 l2 r1 5 -6 7 11 -12 -5 6 -7 t2 b1
end
\end{verbatim}


\end{document}